\theoremstyle{plain}
\newtheorem{theorem}{Theorem}[section]
\newtheorem{lemma}[theorem]{Lemma}
\newtheorem{corollary}[theorem]{Corollary}
\newtheorem{proposition}[theorem]{Proposition}
\theoremstyle{remark}
\newtheorem{remark}[theorem]{Remark}
\begin{document}
\allowdisplaybreaks[4]
\numberwithin{figure}{section}
\numberwithin{table}{section}
 \numberwithin{equation}{section}
% \numberwithin{figure}{section}
%
\title[A Posteriori Error Estimator for the Obstacle Problem]
 {Inhomogeneous Dirichlet Boundary Condition in the A Posteriori Error Control of the Obstacle Problem}

\author{Sharat Gaddam}
\address{Department of Mathematics, Indian Institute of Science, Bangalore - 560012}
\email{sharat12@math.iisc.ernet.in}

\author{Thirupathi Gudi}
\address{Department of Mathematics, Indian Institute of Science, Bangalore - 560012}
\email{gudi@math.iisc.ernet.in}

\date{}
\begin{abstract}
We propose a new and simpler residual based a posteriori error
estimator for finite element approximation of the elliptic
obstacle problem. The results in the article are two fold.
Firstly, we address the influence of the inhomogeneous Dirichlet
boundary condition in {\em a posteriori} error control of the
elliptic obstacle problem. Secondly by rewriting the obstacle
problem in an equivalent form, we derive simpler {\em a
posteriori} error bounds which are free from min/max functions. To
accomplish this, we construct a post-processed solution $\tilde
u_h$ of the discrete solution $u_h$ which satisfies the exact
boundary conditions although the discrete solution $u_h$ may not
satisfy. We propose two post processing methods and analyze them.
We remark that the results known in the literature are either for
the homogeneous Dirichlet boundary condition or that the estimator
is only weakly reliable in the case of inhomogeneous Dirichlet
boundary condition.

\end{abstract}
\keywords{finite element, a posteriori error estimate, obstacle
problem, variational inequalities, Lagrange multiplier, reliable,
efficient}
\subjclass{65N30, 65N15}
\maketitle
%%%%%%%%%%%%%%%%%%%%%%%%%%%%%%%%%%%%%%%%%%%%%%%%%%%%%%%%%%%%%%%%
\allowdisplaybreaks
\def\R{\mathbb{R}}
\def\cA{\mathcal{A}}
\def\cK{\mathcal{K}}
\def\cN{\mathcal{N}}
\def\p{\partial}
\def\O{\Omega}
\def\bbP{\mathbb{P}}
\def\cV{\mathcal{V}}
\def\cR{\mathcal{R}}
\def\cT{\mathcal{T}}
\def\cE{\mathcal{E}}
\def\bF{\mathbb{F}}
\def\bC{\mathbb{C}}
\def\bN{\mathbb{N}}
\def\ssT{{\scriptscriptstyle T}}
\def\HT{{H^2(\O,\cT_h)}}
\def\mean#1{\left\{\hskip -5pt\left\{#1\right\}\hskip -5pt\right\}}
\def\jump#1{\left[\hskip -3.5pt\left[#1\right]\hskip -3.5pt\right]}
\def\smean#1{\{\hskip -3pt\{#1\}\hskip -3pt\}}
\def\sjump#1{[\hskip -1.5pt[#1]\hskip -1.5pt]}
\def\jumptwo{\jump{\frac{\p^2 u_h}{\p n^2}}}
%%%%%%%%%%%%%%%%%%%%%%%%%%%%%%%%%%%%%%%%%%%%%%%%%%%%%%%%%%%%%%%%%%%%%%%%%%%
%%%%%%%%%%%%%%%%%%%%%%%%%%%%%%%%%%%%%%%%%%%%%%%%%%%%%%%%%%%%%%%%%
\section{Introduction}\label{sec:Intro}
The elliptic obstacle problem is one of the popular prototype
models for the study of elliptic variational inequalities. The
applications of variational inequalities are enormous in the
modern scientific computing world, e.g. in contact mechanics,
option pricing and fluid flow problems. The numerical analysis of
these class of problems is an interesting subject as they offer
challenges both in theory and computation. We refer to the books
\cite{AH:2009:VI,Glowinski:2008:VI,KS:2000:VI,Sutt:book} for the
theory of variational inequalities and their corresponding
numerical analysis. Apart from these, we refer to the articles
\cite{BHR:1977:VI,Falk:1974:VI} and the recent articles
\cite{BLY:2012:C0IP,BLY:2012:C0IP1,HK:1994:multiadaptive,Wang:2002:P2VI,WHC:2010:DGVI}
for the convergence analysis of finite element methods for the
obstacle problem. The obstacle problem exhibits free boundaries
where the regularity of the solution is affected. It is worth
remarking here that the location of a free boundary is not known
{\em a priori}. Adaptive finite element methods based on reliable
and efficient {\em a posteriori} error estimates are of particular
interest in this contest as they can capture the free boundaries
by local mesh refinement around them.  In designing any of
adaptive schemes, the first step is to derive some computable
error estimators which are both reliable and efficient, see
\cite{AO:2000:Book} for error analysis of various type. There are
many works in deriving residual based a posteriori error estimates
for the obstacle problem, see
\cite{AOL:1993:Apost,BC:2004:VI,Braess:2005:VI,CN:2000:VI,GKVZ:2011:VeeserHirarchy,TG:2014:VIDG,TG:2014:VIDG1,TG:2015:VIP2,NSV:2005:Apost,NPZ:2010:VI,Veeser:2001:VI,WW:2010:Apost}
and see
\cite{BS:2014:hp-apost,Gwinner:2009:pfem,Gwinner:2013:pfem,WHE:2015:ApostDG}.
In recent years, much of research is focused on proving the
convergence of adaptive methods based on a posteriori error
estimates. In this direction, we refer to
\cite{BCH:2007:VI,BCH:2009:AVI,FPP:2014:Obstacle,SV:2007:APost,PP:2013:AconvVI}
for the work related to the obstacle problem. Further, we refer
\cite{Belgacem:2000:SINUM,DH:2015:Signorini,HR:2012:Signorini,HW:2005:Signorini}
and
\cite{AS:2011:Contact,BS:2000:VI,HN:2005:Signorini,WW:2009:Contact}
for the work related to the numerical approximation of the
Signorini contact problem.

In many occasions, it is assumed for convenience in the a
posteriori error analysis of obstacle problems that the Dirichlet
data is either zero or trace of a finite element function. However
it is not clear if the error estimator with homogeneous Dirichlet
boundary condition is reliable and efficient in the energy norm up
to some Dirichlet data oscillations. The answer to this question
so far seems to be not clear as it can be seen in \cite[Section
4.2]{FPP:2014:Obstacle} that the error estimator is proved only
weakly reliable with nonhomogeneous Dirichlet boundary data
oscillations. Exceptions to this question hold for the local error
analysis (estimates in maximum norm) in
\cite{NSV:2003:Local,NSV:2005:Apost} where  reliable and efficient
error estimates were derived for general Dirichlet boundary data.
One of the difficulties in the energy norm error estimate arises
due to the fact that the error $u-u_h$ does not belong to
$H^1_0(\Omega)$. It will be difficult to argue with the residual
directly using the error $u-u_h$. We may think of introducing an
auxiliary problem correcting this as in the case of linear
elliptic problems, but with this the estimator will consists of
the unknown solution of that auxiliary problem. In this article,
we introduce two options of constructing a post-processed solution
in which one of them can be computed explicitly. We also derive
some estimates for the discrete solution and its post-processed
solution. Thereby, we address the question of proving the
reliability and efficiency of the error estimator up to some
Dirichlet data oscillations as in the case of linear elliptic
problems. The results also can be viewed in another aspect that
since the obstacle problem with a general obstacle $\chi$ can be
transformed into a problem with zero obstacle with nonhomogeneous
Dirichlet boundary condition, for example see
\cite{FPP:2014:Obstacle},  the error estimator for  general
obstacle problem can be simplified to the error estimator for the
zero obstacle. Generally, the error estimator for zero obstacle
problem is simpler and is free of min/max functions dealing with
inconsistency of obstacle constraint.

The rest of the article is organized as follows. In the remaining
part of this section, we introduce the model problem and some
preliminaries. In section \ref{sec:Prelims}, we introduce some
notation, define the discrete problem and derive some properties
of the discrete solution. In section \ref{sec:post-solu}, we
construct a post-processed  discrete solution. We propose two
methods for this purpose. One of them is by  harmonic extension
and the other by linear extension. Therein, we derive some error
estimates for the discrete solution and its post-processed
solution. In section \ref{sec:ErrorAnalysis}, we present the a
posteriori error analysis. In section \ref{sec:Simplified}, we
derive error bounds that are simpler by rewriting general obstacle
problem into a problem with zero obstacle.  We present some
numerical experiments in section \ref{sec:Numerics} to illustrate
the theoretical findings and finally conclude the article in
section \ref{sec:conclusions}.

Let $\Omega\in \R^2$ be a bounded polygonal domain  with boundary
denoted by $\partial\Omega$ (without slit). However the results on
harmonic extension and the results in section 5 are applicable to
three dimensional problems. We consider the obstacle $\chi \in
C(\bar\Omega)\cap H^1(\Omega)$ satisfying
$\chi|_{\partial\Omega}\leq g$ a.e. on $\partial\Omega$, hereafter
the function $g$ is assumed to be given and denotes the Dirichlet
boundary data. Further, we assume that $g$ is the trace of a
$H^1(\Omega)$ function. Define the closed and convex set by
\begin{align*}
\cK :=\{v\in H^1(\Omega): v\geq \chi \text{ a.e. in } \Omega,\;\;
\gamma_0(v)=g \text{ on } \partial\Omega \},
\end{align*}
where $\gamma_0:H^1(\Omega)\rightarrow  L^2 (\partial\Omega)$ is
the trace map, whose range is denoted by $\tilde
H^{1/2}(\partial\Omega)$. Since $g\in \tilde
H^{1/2}(\partial\Omega)$, there is some $\tilde g\in H^1(\Omega)$
with $\gamma_0(\tilde g)=g$. Then it can be seen that the set
$\cK$ is nonempty as $\chi^+ :=\max\{\chi,\tilde g\}\in \cK$. The
model problem for the discussion consists of finding $u\in \cK$
such that
\begin{align}\label{eq:MP}
a(u,v-u)\geq (f,v-u)\;\; \text{ for all } v\in \cK,
\end{align}
where $a(u,v):=(\nabla u,\nabla v)$ and $f\in L^2(\Omega)$ is a
given function. Hereafter, $(\cdot,\cdot)$ denotes the
$L^2(\Omega)$ inner-product while $\|\cdot\|$ denotes the
$L^2(\Omega)$ norm. The existence of a unique solution to
\eqref{eq:MP} follows from the result of Stampacchia
\cite{AH:2009:VI,Glowinski:2008:VI,KS:2000:VI}.

For the rest of the discussions, we assume that the Dirichlet data
$g\in \tilde H^{1/2}(\partial\Omega)\cap C(\partial\Omega)$.

\par
\noindent Define the Lagrange multiplier
 $\sigma\in H^{-1}(\Omega)$  by
\begin{align}\label{eq:sigmadef}
\langle \sigma, v\rangle =(f,v)-a(u,v)\;\;\text{ for all } v\in
H^1_0(\Omega),
\end{align}
where $\langle \cdot,\cdot\rangle$ denotes the duality bracket of
$H^{-1}(\Omega)$ and $H^1_0(\Omega)$. It follows from
\eqref{eq:sigmadef} and \eqref{eq:MP} that
\begin{equation}\label{eq:sigma}
\langle \sigma, v-u\rangle\leq 0\; \text{ for all } v \in \cK.
\end{equation}
\par
\noindent Note that in the above equation \eqref{eq:sigma}, the
test function $v-u\in H^1_0(\Omega)$ and hence the duality bracket
$\langle \sigma, v-u\rangle$ is meaningful.

\section{Notation and Preliminaries}\label{sec:Prelims}
Below, we list the notation that will be used throughout the
article:
\begin{align*}
\cT_h &:=\text{a regular simplicial triangulations of } \Omega\\
T&:=\text{a triangle of } \cT_h,\qquad |T|:=\text{ area of } T\\
 h_T &:=\text{diameter of } T, \qquad h :=\max\{h_T : T\in\cT_h\}\\
\cV_h^i& :=\text{set of all vertices of } \cT_h \text{ that are in
}
\O\\
\cV_h^b& :=\text{set of all vertices of } \cT_h \text{ that are on
}
\p\O\\
\cV_T& :=\text{set of three vertices of } T\\
\cE_h^i&:=\text{set of all interior edges of } \cT_h\\
\cE_h^b&:=\text{set of all boundary edges of } \cT_h\\
h_e&:=\text{length of an edge } e\in\cE_h.
\end{align*}
We mean by regular triangulation that the triangles in $\cT_h$ are
shape-regular and there are no hanging nodes in $\cT_h$. Each
triangle $T$ in $\cT_h$ is assumed to be closed. Define $\cT_h^i$
to be the set of all triangles which do not share an edge with
boundary $\p\O$. Let $\cT_h^b$ denote the remaining set of
triangles, i.e., the set of all triangles $T\in \cT_h$ which have
an edge $e\in \cE_h^b$ on its boundary $\p T$. For simplicity
assume that any $T\in \cT_h^b$ shares at most one edge with
boundary $\p\Omega$, otherwise the triangle $T$ has no interior
node.

In order to define the jump of discontinuous functions
conveniently, define a broken Sobolev space
\begin{eqnarray*}
H^1(\O,\cT_h) :=\{v\in L^2(\Omega) :\,v_{\ssT}= v|_{T}\in
H^1(T)\,\text{ for all }\,~T\in{\mathcal T}_h\}.
\end{eqnarray*}
For any $e\in\cE_h^i$, there are two triangles $T_+$ and $T_-$
such that $e=\partial T_+\cap\partial T_-$. Let $n_+$ be the unit
normal of $e$ pointing from $T_+$ to $T_-$, and $n_-=-n_+$, see
Fig \ref{Fig1}. For any $v\in H^1(\Omega,{\mathcal T}_h)$, we
define the jump of $v$ on $e$ by
\begin{eqnarray*}
 \sjump{v} : = v_+ + v_-,
\end{eqnarray*}
where $v_\pm=v\big|_{T_\pm}$. Similarly define the jump of $w\in
H^1(\Omega,{\mathcal T}_h)^2$ on $e\in\cE_h^i$ by
\begin{eqnarray*}
\sjump{w} := w_-\cdot n_-+w_+\cdot n_+,
\end{eqnarray*}
where $w_\pm=w |_{T_\pm}$. For any edge $e\in \cE_h^b$,  there is
a triangle $T\in\cT_h$ such that $e=\partial T\cap
\partial\Omega$. For any $v\in H^1(T)$, we set on $e\in \cE_h^b$
\begin{eqnarray*}
 \sjump{v} := v.
\end{eqnarray*}

\subsection{Discrete Problem}
We use the following linear finite element spaces $V_h$ and
$V_h^0$ defined on $\cT_h$ by
$$V_h :=\{v_h\in H^1(\Omega): v_h|_{T} \in \bbP_1(T)  \text{ for all } T\in\cT_h\},$$
and $V_h^0=V_h\cap H^1_0(\O),$ respectively. Define the discrete
trace space $W_h$ by
$$W_h :=\{w_h\in C(\partial\Omega): v_h|_{e} \in \bbP_1(e)  \text{ for all } e\in\cE_h^b\}.$$
Let $g_h\in W_h$ be the nodal interpolation of $g$, i.e.
$g_h(z)=g(z)$ for all $z\in \cV_h^b$. Define the discrete set
\begin{align}\label{eq:Kh}
\cK_h :=\{v_h\in V_h: v_h(z)\geq \chi(z)\,\text{ for all }
z\in\cV_h^i, \quad v_h=g_h\text{ on } \p\O \}.
\end{align}
The discrete problem consists of finding $u_h\in\cK_h$ such that
\begin{align}\label{eq:DP}
a(u_h,v_h-u_h)\geq (f,v_h-u_h)\,\text{ for all } v_h\in \cK_h.
\end{align}
Similar to the case of the continuous problem \eqref{eq:MP}, it
can easily be shown that the discrete problem \eqref{eq:DP} has a
unique solution. Below, we derive some properties of the solution
$u_h$.
\par
Note that for any $z\in \cV_h^i$ and the corresponding Lagrange
basis function (hat function) $\psi_z$ defined by
\begin{equation*}
q\in \cV_h^i, \qquad \psi_z (q) := \left\{ \begin{array}{ll} 1 & \text{ if } z = q,\\\\
0 & \text{otherwise},
\end{array}\right.
\end{equation*}
we have $v_h=u_h+\psi_z \in \cK_h$. Then \eqref{eq:DP} implies
that
\begin{align}\label{eq:DPProperty1}
a(u_h,\psi_z)\geq (f,\psi_z) \;\; \text{ for all }\; z \in
\cV_h^i.
\end{align}
Furthermore if $u_h(z)>\chi(z)$ for any $z\in\cV_h^i$, it is easy
to prove  by considering $v_h^\pm=u_h\pm \epsilon \psi_z \in
\cK_h$ (for sufficiently small $\epsilon>0$) in \eqref{eq:DP} that
\begin{align}\label{eq:DPProperty2}
a(u_h,\psi_z)=(f,\psi_z) \;\; \text{ for all }\; z \in \{z
\in\cV_h^i :u_h(z)>\chi(z)\}.
\end{align}
\par
\noindent As in the case of continuous problem, we define the
discrete Lagrange multiplier $\sigma_h \in V_h^0$ by
\begin{align}\label{eq:sigmah-def}
\langle \sigma_h,v \rangle_h = (f, v)-a(u_h,v) \quad \forall
\,v\in V_h^0,
\end{align}
where
\begin{align}\label{eq:discret-IP}
\langle w,v \rangle_h =\sum_{T\in \cT_h}\frac{|T|}{3}\sum_{z\in
\cV_T} w(z)\,v(z),
\end{align}
which is the well-known mass lumping formula.

\par
\noindent Using \eqref{eq:DPProperty1}-\eqref{eq:DPProperty2}, we
deduce that
\begin{align}
\sigma_h(z)&\geq 0 \quad \forall z\in\cV_h,\\
\sigma_h(z)&=0\quad\text{ if } u_h(z)>\chi(z),\;\;z\in\cV_h^i.
\end{align}

\par
\noindent The following error estimate for the lumped-mass
numerical integration is well-known,
\cite{Veeser:2001:VI,TG:2014:VIDG}: For any $T\in\cT_h$ and
$v_h,w_h\in V_h$, there holds
\begin{align}\label{eq:NumerQuad}
\left|\frac{|T|}{3}\sum_{z\in
\cV_T}v_h(z)w_h(z)-\int_Tv_hw_h\,dx\right|\leq C h_T\|\nabla
v_h\|_{L_2(T)}\|\nabla w_h\|_{L_2(T)}.
\end{align}

\section{Post-Processed Solution}\label{sec:post-solu}
Since the discrete problem \eqref{eq:DP} uses the approximate
Dirichlet data $g_h$ which is not equal to the given data $g$, the
numerical scheme can be treated as a nonconforming scheme with
respect to the Dirichlet data. In this section, we propose two
post-processing methods to construct an intermediate solution
$\tilde u_h$ (which can be treated as a post-processing of $u_h$)
constructed with the help of $u_h$ and $g$ as follows.  For
$T\in\cT_h^i$, define $\tilde u_h$ to be the same as $u_h$ on $T$.
For $T\in\cT_h^b$, first define $u_h^*$ on $\p T$ ( the boundary
of $T$) by
\begin{equation}\label{eq:uht}
u_h^*(x,y) := \left\{ \begin{array}{ll} g(x,y) & \text{ if } (x,y)\in \p\O,\\\\
u_h(x,y) & \text{ if } (x,y) \in \p T\backslash \p\O.
\end{array}\right.
\end{equation}
Then define $\tilde u_h$ to be some $H^1(T)$ extension of
$u_h^*\in C(\p T)$ to $T$ such that $\tilde u_h|_{\p T}=u_h^*|_{\p
T}$ and $\tilde u_h|_{T}=u_h|_{T}$ for all $T\in\cT_h^i$. By the
construction $\tilde u_h \in H^1(\Omega)$ and $u_h-\tilde
u_h\equiv 0$ on any triangle $T\in\cT_h^i$.

\par
\noindent In general, there will be many choices of $\tilde u_h$.
In the following we will present two methods for constructing it.
One approach is by using the Harmonic extension and the other is
by linear interpolation. We discuss this in two dimensions context
and note that the similar extension in three dimension is quite
similar.

\par
\noindent {\em Method 1.} ({\em Harmonic Extension}): Let $T\in
\cT_h^b$ and $u_h^*\in C(\p T)$ be defined by  \eqref{eq:uht}.
Then define $\tilde u_h \in H^1(T)$ to be the weak solution of
\begin{align}
\tilde u_h &=u_h^* \text{  on  } \p T,\label{eq:HE1}\\
\Delta \tilde u_h &=0 \text{    in  } T. \label{eq:HE2}
\end{align}
By the stability of elliptic problems and scaling, we find that
\begin{align*}
\|\tilde u_h\|_{H^1(T)}\leq C \|u_h^*\|_{H^{1/2}(\p T)},
\end{align*}
where the constant $C$ is independent of $T$. Note that $\tilde u_h-u_h$ satisfies
\begin{align*}
\tilde u_h-u_h &=u_h^*-u_h \text{  on  } \p T,\\
\Delta (\tilde u_h-u_h) &=0 \text{    in  } T.
\end{align*}
We can show by the stability of elliptic problems and scaling that
\begin{align*}
\|\tilde u_h-u_h\|_{H^1(T)}\leq C \|u_h^*-u_h\|_{H^{1/2}(\p T)}.
\end{align*}
In addition if $g\in H^1(e)$ where $e=\p\O\cap \p T$, then by the
approximation property of Lagrange interpolation, see
\cite{CC:1996:Hhalf}, we find
\begin{align*}
\|u_h^*-u_h\|_{H^{1/2}(\p T)}\leq C h_T^{1/2} |g-g_h|_{H^{1}(e)},
\end{align*}
and hence
\begin{align*}
\|\tilde u_h-u_h\|_{H^1(T)}\leq C h_T^{1/2} |g-g_h|_{H^{1}(e)}.
\end{align*}

\par
\noindent Further since the triangle $T$ is convex and $g\in
H^1(e)$, by the elliptic regularity theory $\tilde u_h\in
H^{3/2}(T)\subset C(\bar T)$. Then the weak maximum principle for
harmonic functions implies that
\begin{align*}
\min_{z\in T}\tilde u_h(z)=\min_{z\in\partial T} u_h^*(z).
\end{align*}
\par
\noindent We deduce the following proposition on the harmonic
extension:
\begin{proposition}
Let $T\in \cT_h^b$ and $e$ be an edge of $T$ with $e\in \cE_h^b$.
Let $\tilde u_h$ be the harmonic extension of $u_h^*$ defined by
$\eqref{eq:HE1}$-$\eqref{eq:HE2}$. Then, if $g\in H^1(e)$, there
holds
\begin{align}\label{eq:OscHg}
\|\tilde u_h-u_h\|_{H^1(T)}\leq C h_T^{1/2} |g-g_h|_{H^{1}(e)},
\end{align}
and  furthermore
\begin{align}\label{eq:minP}
\min_{z\in T}\tilde u_h(z)=\min_{z\in\partial T} u_h^*(z).
\end{align}
\end{proposition}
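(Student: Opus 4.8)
The plan is to establish the two claims separately, since the proof essentially consists of collecting the estimates already derived in the discussion preceding the proposition statement and packaging them cleanly.

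For the bound \eqref{eq:OscHg}, I would argue in three steps. First, observe that $w:=\tilde u_h-u_h$ is itself harmonic on $T$ with boundary data $u_h^*-u_h$ on $\p T$; this is immediate since $u_h|_T\in\bbP_1(T)$ is harmonic and $\tilde u_h$ is harmonic by construction. Second, invoke the standard stability estimate for the harmonic (Dirichlet) extension together with a scaling argument on the reference triangle to get $\|w\|_{H^1(T)}\le C\|u_h^*-u_h\|_{H^{1/2}(\p T)}$ with $C$ independent of $T$. Third, note that $u_h^*-u_h$ vanishes on $\p T\backslash\p\O$ and equals $g-g_h$ on $e=\p T\cap\p\O$; hence $\|u_h^*-u_h\|_{H^{1/2}(\p T)}$ is controlled by a fractional norm of $g-g_h$ supported on $e$. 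Using the approximation property of the Lagrange interpolation on $e$ from \cite{CC:1996:Hhalf}, one obtains $\|u_h^*-u_h\|_{H^{1/2}(\p T)}\le C h_T^{1/2}|g-g_h|_{H^1(e)}$. Chaining these inequalities gives \eqref{eq:OscHg}.

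For the identity \eqref{eq:minP}, I would first record that since $T$ is convex and $g\in H^1(e)$, elliptic regularity gives $\tilde u_h\in H^{3/2}(T)\hookrightarrow C(\bar T)$, so the pointwise minimum over $\bar T$ is attained and the boundary values are continuous. Then the weak maximum principle for harmonic functions (which applies because $\tilde u_h$ is harmonic in $T$ and continuous up to $\bar T$) yields that $\tilde u_h$ attains both its maximum and its minimum on $\p T$; in particular $\min_{z\in\bar T}\tilde u_h(z)=\min_{z\in\p T}\tilde u_h(z)=\min_{z\in\p T}u_h^*(z)$, which is \eqref{eq:minP}.

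The routine parts are the scaling argument and the citation-backed interpolation estimate; the only point requiring a little care is the regularity claim $\tilde u_h\in H^{3/2}(T)\subset C(\bar T)$, which is what licenses the use of the pointwise maximum principle — one must use convexity of $T$ together with $g\in H^1(e)$ (equivalently $H^1$ boundary data that is merely piecewise smooth across the vertices of $T$) rather than appealing to full smoothness. I expect this regularity-to-continuity step to be the main obstacle, but it is handled exactly as in the preceding discussion, so the proof of the proposition is essentially a matter of assembling the displayed inequalities and citing the maximum principle.
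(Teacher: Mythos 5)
Your proposal is correct and follows essentially the same route as the paper: the difference $\tilde u_h-u_h$ is harmonic with boundary data $u_h^*-u_h$ supported on $e$, the $H^1$ stability of the Dirichlet problem plus scaling reduces the bound to $\|u_h^*-u_h\|_{H^{1/2}(\partial T)}$, the Lagrange interpolation estimate of \cite{CC:1996:Hhalf} gives the factor $h_T^{1/2}|g-g_h|_{H^1(e)}$, and the minimum identity follows from $H^{3/2}$ regularity on the convex triangle together with the weak maximum principle. No gaps; this matches the paper's argument.
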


\par
Next we proceed to present another approach by linear extension
(linear interpolation).

\par
\noindent {\em Method 2.} ({\em Linear Extension}): Let $T\in
\cT_h^b$ and $u_h^*\in C(\p T)$ be defined by  \eqref{eq:uht}. Let
$e\in \cE_h^b$ be such that $e\subset\p T$.  We will construct
$\tilde u_h$ by connecting through line segments using the values
of $u_h^*$ along the lines perpendicular to $e$. Let $z=(x,y)$ be
an interior point of $T$ and $\ell$ be the line that is orthogonal
to $e$ passing through $z$, as it shown in the Figure
\ref{fig:divide-T}. Let $z_1=(x_0,y_0)$ and $z_2=(x_1,y_1)$ be the
points of intersection of $\ell$ with $\p T$. Then $\tilde u_h$ is
defined on $\ell$ as a linear polynomial assuming the values
$u_h^*(x_0,y_0)$ and $u_h^*(x_1,y_1)$ at $z_1=(x_0,y_0)$ and
$z_2=(x_1,y_1)$, respectively.

\begin{figure}[!hh]
\vspace{-.2cm}
\begin{center}
\setlength{\unitlength}{0.7cm}
\begin{pspicture}(8,5)
%\psgrid
%%%%%%%%%%%%%%%%%%%%%%%%%
\put(-1.5,0.8){$(a_1,b_1)$} \put(2.5,4.25){$(a_3,b_3)$}
\put(6.2,0){$(a_2,b_2)$} \qline(3,4)(2.5,0.6)
%(2.108108108108108,0.648648648648649)
\put(1.7,1.2){$\ell$} \put(1,1.5){$T_1$}\put(3.5,1.3){$T_2$}
\qline(0,1)(3,4)\put(2.9,2){$\bar h$} \qline(3,4)(6,0)
\qline(0,1)(6,0) \qline(1.5,0.75)(1.85,2.85)
\put(1.2,0.5){$z_1$}\put(1.5,3){$z_2$} \put(1.85,2){$z$}
\psset{dotsize=4pt} \psdots*(1.708333333,2)(0,1)(3,4)(6,0)
\put(2.5,0.2){$e$}
%%%%%%%%%%%%%%%%%%%%
\end{pspicture}
\vspace{1cm} \caption{Subdivision of $T=T_1\cup T_2$}
\label{fig:divide-T}
\end{center}
\end{figure}
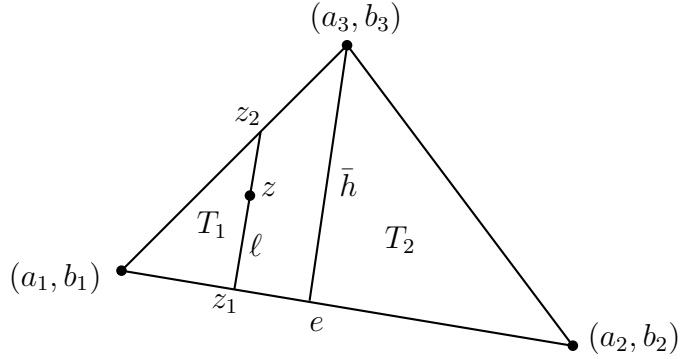
We divide the triangle $T$ as $T=T_1\cup T_2$, where $T_1$ and
$T_2$ are sub triangles of $T$ separated by the height $\bar h$ as
shown in the Figure \ref{fig:divide-T}.
Then define $\tilde u_h$ on $T$ by
\begin{align}\label{eq:LE}
\tilde u_h(x,y) &:=\frac{h_2 u_h^*(x_0,y_0)+h_1
u_h^*(x_1,y_1)}{h_1+h_2}\quad \text{ for } (x,y)\in T^\circ,
\end{align}
where $T^\circ$ is the interior of $T$,

\begin{align}
x_0
&=\frac{(a_1-a_2)\big((a_1-a_2)x+(b_1-b_2)y\big)+(b_1-b_2)(b_1a_2-b_2a_1)}{(a_1-a_2)^2+(b_1-b_2)^2},
\label{eq:x0}\\ \notag\\
y_0
&=\frac{(b_2-b_1)\big((a_2-a_1)x+(b_2-b_1)y\big)+(a_2-a_1)(b_1a_2-b_2a_1)}{(a_1-a_2)^2+(b_1-b_2)^2},
\label{eq:y0}\\ \notag\\
x_1
&=\frac{(a_i-a_3)\big((a_1-a_2)x+(b_1-b_2)y\big)+(b_1-b_2)(b_ia_3-b_3a_i)}{(a_1-a_2)(a_i-a_3)+(b_1-b_2)(b_i-b_3)},
\notag\\ \notag\\
y_1
&=\frac{(b_i-b_3)\big((a_1-a_2)x+(b_1-b_2)y\big)+(a_1-a_2)(b_3a_i-b_ia_3)}{(a_1-a_2)(a_i-a_3)+(b_1-b_2)(b_i-b_3)},
\notag\\ \notag\\
h_1
&=\frac{\mid(b_2-b_1)x+(a_1-a_2)y+(b_1a_2-b_2a_1)\mid}{\sqrt{(b_2-b_1)^2+(a_2-a_1)^2}},
\notag\\ \notag\\
h_2 &=\sqrt{(b_1-b_2)^2+(a_1-a_2)^2}\ \
\frac{\mid(a_i-a_3)y-(b_i-b_3)x+(b_ia_3-b_3a_i)\mid}{\mid(a_1-a_2)(a_i-a_3)+(b_1-b_2)(b_i-b_3)\mid},\notag
\end{align}
where $\mid \cdot\mid$ denotes the absolute value and\\
$i=
\begin{cases}
      1 & \text{if}\;(x,y)\in\;T_1,\\
      2 & \text{if}\;(x,y)\in\;T_2.
\end{cases}
$

\par
\noindent Note that $h_1$ is the distance between $z=(x,y)$ and
$z_1=(x_0,y_0)$; and $h_2$ is the distance between $z=(x,y)$ and
$z_2(x_1,y_1)$.

\par
\noindent
Again by the construction, it is immediate to see that
\begin{align}\label{eq:minP1}
\min_{z\in T}\tilde u_h(z)=\min_{z\in\partial T} u_h^*(z).
\end{align}

\par
\noindent In the rest of this section, we assume that $g|_e\in
H^1(e)$ and we establish the following estimate:
\begin{theorem}\label{thm:Oscg}
Let $T\in\cT_h^b$ and define $\tilde u_h$ by $\eqref{eq:LE}$, a
linear extension of $u_h^*$ given by $\eqref{eq:uht}$. Let $e\in
\cE_h^b$ be an edge of $T$ and  assume that $g|_e\in H^1(e)$. Then
there holds
\begin{align*}
\|\nabla(\tilde{u}_h - u_h)\|_{L^2(T)} \leq C
h^{\frac{1}{2}}_e\|(g^\prime-g_h^\prime)\|_{L^2(e)},
\end{align*}
where $C$ is a positive constant, $e\in \cE_h^b$ an edge of $T$
and  $g^\prime$ (resp. $g_h^\prime$) denotes the tangential
derivative of $g$ (resp. $g_h$) along $e$.
\end{theorem}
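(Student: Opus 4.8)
The plan is to write $\tilde u_h-u_h$ on $T$ in closed form, reduce the asserted gradient bound to a one–dimensional estimate in the trace defect $\phi:=g-g_h$ on $e$, and then close it using shape regularity of $\cT_h$ together with the vanishing of $\phi$ at the two endpoints of $e$.

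First I would place $e$, by a rigid motion, on the $x$–axis with endpoints $(0,0)$ and $(h_e,0)$ and the third vertex of $T$ at $(p,q)$ with $q>0$ and $0\le p\le h_e$ (the obtuse configuration, where the altitude foot lies outside $e$, is handled the same way); this changes neither $\|\nabla(\tilde u_h-u_h)\|_{L^2(T)}$ nor the tangential derivative along $e$. For $x\in(0,h_e)$ the perpendicular to $e$ through $x$ is the vertical line, which meets $e$ at $(x,0)$ and meets $\p T$ again at $(x,Y(x))$, where $Y(x)=\tfrac{q}{p}x$ on $[0,p]$ and $Y(x)=\tfrac{q}{h_e-p}(h_e-x)$ on $[p,h_e]$ is the height function of the two non–boundary edges. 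Since $(x,0)\in e\subset\p\O$ we have $u_h^\ast(x,0)=g(x,0)$ by \eqref{eq:uht}, while $(x,Y(x))$ lies on an interior edge of $T$ (any $T\in\cT_h^b$ meets $\p\O$ in a single edge), so $u_h^\ast(x,Y(x))=u_h(x,Y(x))$. Then \eqref{eq:LE} reads $\tilde u_h(x,y)=(1-\tfrac{y}{Y(x)})\,g(x,0)+\tfrac{y}{Y(x)}\,u_h(x,Y(x))$, and since the affine function $u_h$ equals its own linear interpolant along the vertical segment, $u_h(x,y)=(1-\tfrac{y}{Y(x)})\,u_h(x,0)+\tfrac{y}{Y(x)}\,u_h(x,Y(x))$; subtracting and using $u_h=g_h$ on $e$ gives the key identity
\[
(\tilde u_h-u_h)(x,y)=\Big(1-\tfrac{y}{Y(x)}\Big)\,\phi(x),\qquad \phi(x):=(g-g_h)(x,0) .
\]
Because $g|_e\in H^1(e)$, $\phi$ is continuous on $\bar e$ with $\phi(0)=\phi(h_e)=0$ (as $g_h$ interpolates $g$ at the endpoints of $e$), so from $\phi(x)=\int_0^x\phi'$ and $\phi(x)=-\int_x^{h_e}\phi'$ we obtain
\[
|\phi(x)|^2\le \min\{x,\,h_e-x\}\,\|g'-g_h'\|_{L^2(e)}^2,\qquad x\in(0,h_e),
\]
where $\phi'=g'-g_h'$ is the tangential derivative along $e$.

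Differentiating the key identity gives $\p_y(\tilde u_h-u_h)=-\phi(x)/Y(x)$ and $\p_x(\tilde u_h-u_h)=\phi'(x)(1-\tfrac{y}{Y(x)})+\phi(x)\,y\,Y'(x)/Y(x)^2$. Squaring (using $(a+b)^2\le 2a^2+2b^2$) and integrating over $T=\{0<x<h_e,\ 0<y<Y(x)\}$, the inner $y$–integration leaves, up to absolute constants, the three one–dimensional integrals $\int_0^{h_e}\phi^2/Y$, $\int_0^{h_e}\phi'^2\,Y$ and $\int_0^{h_e}\phi^2\,Y'^2/Y$. The middle one is at most $(\max_{[0,h_e]}Y)\,\|g'-g_h'\|_{L^2(e)}^2$. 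On each linear piece of $Y$ both $1/Y$ and $Y'^2/Y$ equal a constant (in $x$) times $1/x$ on $[0,p]$ and times $1/(h_e-x)$ on $[p,h_e]$, so combining with $|\phi(x)|^2\le\min\{x,h_e-x\}\,\|g'-g_h'\|_{L^2(e)}^2$ the first and third integrals evaluate to constants times $p^2/q$, $(h_e-p)^2/q$ and $q$, times $\|g'-g_h'\|_{L^2(e)}^2$. Finally, shape regularity of $\cT_h$ gives $h_e\le h_T\le C h_e$ and $\max_{[0,h_e]}Y=\bar h=2|T|/h_e$ with $c\,h_e\le\bar h\le C\,h_e$, whence $p^2/q,\ (h_e-p)^2/q,\ q,\ \max Y\le C h_e$, and therefore $\|\nabla(\tilde u_h-u_h)\|_{L^2(T)}^2\le C h_e\,\|g'-g_h'\|_{L^2(e)}^2$; taking square roots finishes the proof.

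The crux, and the only genuinely delicate point, is this last bundle of estimates: the integrands $\phi^2/Y$ and $\phi^2 Y'^2/Y$ blow up like $1/x$ near $x=0$ and like $1/(h_e-x)$ near $x=h_e$ — exactly where $Y$ degenerates to $0$ — so one must exploit that $\phi$ vanishes there at precisely the compensating rate $\phi(x)^2\lesssim x$ (resp. $\lesssim h_e-x$) and keep all constants dependent only on the shape-regularity parameter, also checking the bookkeeping on both linear pieces of $Y$ (and in the obtuse configuration). Everything else — the two exact linear-interpolation identities for $\tilde u_h$ and $u_h$, and the geometric comparisons $h_e\approx h_T\approx\bar h$ — is routine.
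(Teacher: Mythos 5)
Your proposal is correct and follows essentially the same route as the paper: the explicit interpolation identity $(\tilde u_h-u_h)(x,y)=\bigl(1-\tfrac{y}{Y(x)}\bigr)\phi(x)$ is exactly the paper's formula $\tfrac{h_2}{h_1+h_2}(g-g_h)(x_0,y_0)$ in adapted coordinates, your middle integral $\int\phi'^2\,Y$ is the content of Lemma \ref{lem:Gradg}, and your treatment of the singular integrals $\int\phi^2/Y$ and $\int\phi^2Y'^2/Y$ via $|\phi(x)|^2\le\min\{x,h_e-x\}\,\|g'-g_h'\|_{L^2(e)}^2$ (from absolute continuity and $\phi=0$ at the endpoints) is precisely the cancellation used in Lemma \ref{lem:Gradg1} on the pieces $T_1$ and $T_2$. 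The only cosmetic difference is that you carry explicit coordinates where the paper works with $h_1,h_2,\nabla h_1,\nabla h_2$ and the angle bound $\cos\theta_i\ge C$.
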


In subsequent discussion, we prove two lemmas before proving the
theorem. Note that $\tilde u_h \in L^2(T)$ for all $T\in\cT_h^b$.
Therefore it defines a distribution via
\begin{align*}
\tilde u_h (\phi) =\int_T \tilde u_h\phi\,dx\quad \forall
\phi\in{\mathcal D}(T),
\end{align*}
where ${\mathcal D}(T)$ is the space of all $C^\infty$ functions
having compact support in $T$. The distribution derivative
$\partial\tilde u_h /\partial x_i$ for ($i=1,2$) is given by
\cite{Kesavan:book},
\begin{align*}
\frac{\partial \tilde u_h }{\partial x_i} (\phi) =-\int_T \tilde
u_h \frac{\partial \phi}{\partial x_i}\,dx \quad \forall
\phi\in{\mathcal D}(T).
\end{align*}
Further, we have
\begin{align}
\frac{\partial (\tilde u_h-u_h)}{\partial x_i} (\phi) =-\int_T
(\tilde u_h-u_h) \frac{\partial \phi}{\partial x_i}\,dx \quad
\forall \phi\in{\mathcal D}(T).\label{eq:WeakDeri}
\end{align}
Since $u_h|_T\in P_1(T)$, we note that
\begin{align*}
u_h(x,y) &=\frac{h_2 u_h(x_0,y_0)+h_1 u_h(x_1,y_1)}{h_1+h_2}\quad
\text{ for } (x,y)\in T^\circ,
\end{align*}
and
\begin{align*}
(\tilde{u}_h - u_h)(x,y) &=
\bigg(\frac{h_2}{h_1+h_2}\bigg)(g-g_h)(x_0,y_0).
\end{align*}
From \eqref{eq:WeakDeri}, we can treat the differentiation on
$\tilde{u}_h - u_h$ under the integral (in the sense of a.e.) and
write
\begin{align}
\nabla(\tilde{u}_h - u_h)(x,y) &= \bigg(\frac{h_1\nabla
h_2-h_2\nabla h_1}{(h_1+h_2)^2}\bigg)(g-g_h)(x_0,y_0)\notag\\
&\qquad
+\bigg(\frac{h_2}{h_1+h_2}\bigg)\nabla\big((g-g_h)(x_0,y_0)\big).\label{eq:tuhderivative}
\end{align}
Using the formula for $h_1$ and $h_2$, we find
\begin{align*}
\nabla h_1
&=\bigg(\frac{(b_2-b_1)}{\sqrt{(b_1-b_2)^2+(a_1-a_2)^2}},\frac{(a_1-a_2)}{\sqrt{(b_1-b_2)^2+(a_1-a_2)^2}}\bigg),
\\\\
\nabla h_2
&=\frac{\sqrt{(b_1-b_2)^2+(a_1-a_2)^2}}{\mid(a_1-a_2)(a_i-a_3)+(b_1-b_2)(b_i-b_3)\mid}\big((b_i-b_3),(a_i-a_3)\big),
\end{align*}
and then
\begin{align*}
|\nabla h_1\cdot\nabla h_2| &=1,\\
|\nabla h_1\cdot\nabla h_1|&=1,\\
\nabla h_2\cdot\nabla h_2
&=\frac{\big({(b_1-b_2)^2+(a_1-a_2)^2})\big((b_i-b_3)^2+(a_i-a_3)^2\big)}{\mid(a_1-a_2)(a_i-a_3)+(b_1-b_2)(b_i-b_3)\mid^2}=\frac{1}{\cos^2(\theta_i)},
\end{align*}
where, $\theta_i$ is  the interior angle  of  $T$ at  $(a_i,b_i)$
for $i=1,2$. Since the triangulation is regular, the minimal angle
in the triangulation $\cT_h$ is bounded below by some angle
$\theta_0$. Therefore $\theta_i\leq \pi/2-\theta_0$ and
$\cos(\theta_i) \geq \cos(\pi/2-\theta_0)\geq C$ for some positive
constant $C$ uniformly. Further, we find by the chain rule
\begin{align*}
\frac{\partial((g-g_h)(x_0,y_0))}{\partial x} &=
\Large\frac{t_1}{h_e^2} \bigg(t_1 \frac{\partial(g-g_h)}{\partial
x_0}(x_0,y_0)+t_2\frac{\partial(g-g_h)}{\partial
y_0}(x_0,y_0)\bigg),\\
\frac{\partial((g-g_h)(x_0,y_0))}{\partial y} &=
\Large\frac{t_2}{h_e^2}\bigg(t_1\frac{\partial(g-g_h)}{\partial
x_0}(x_0,y_0)+t_2\frac{\partial(g-g_h)}{\partial
y_0}(x_0,y_0)\bigg),
\end{align*}
where $h_e =\sqrt{(a_1-a_2)^2+(b_1-b_2)^2}$, $t_1=a_1-a_2$ and
$t_2=b_1-b_2$. Let ${\bf t}=(\hat t_1,\hat t_2)=(t_1,t_2)/h_e$.
Then ${\bf t}$ is the unit tangent vector to the edge $e$ in the
direction of $(t_1,t_2)$. It is now easy to see that
\begin{align}
|\nabla(g-g_h)(x_0,y_0)|= \bigg|\hat
t_1\frac{\partial(g-g_h)}{\partial x_0}(x_0,y_0)+\hat
t_2\frac{\partial(g-g_h)}{\partial
y_0}(x_0,y_0)\bigg|.\label{eq:Gradg}
\end{align}
The term on the right hand side of the above inequality is nothing
but the tangential derivative of $(g-g_h)$ in the direction of
$(t_1,t_2)$ on the edge $e$. We now prove the following lemma:

\begin{lemma}\label{lem:Gradg}
Let $x_0(x,y)=x_0$ and $y_0(x,y)=y_0$ be defined as in \eqref{eq:x0} and \eqref{eq:y0}, respectively. Then
for $T\in\cT_h^b$ and $e=\partial T\cap\partial\Omega$, there holds
\begin{align*}
\int_T|\nabla(g-g_h)(x_0(x,y),y_0(x,y)|^2\,dxdy \leq C
h_e\|(g^\prime-g_h^\prime)\|_{L^2(e)}^2,
\end{align*}
where $g^\prime$ (resp. $g_h^\prime$) denotes the tangential
derivative of $g$ (resp. $g_h$) along $e$.
\end{lemma}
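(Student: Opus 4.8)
The plan is to reduce the two–dimensional integral to a one–dimensional integral over $e$ by a slicing (Fubini) argument, after recognising the integrand as the squared tangential derivative of $g-g_h$ composed with the orthogonal projection onto the line carrying $e$. First I would invoke \eqref{eq:Gradg} together with the remark following it: for every $(x,y)\in T$ the quantity $|\nabla(g-g_h)(x_0(x,y),y_0(x,y))|$ is exactly $|(g-g_h)'|$ evaluated at the point $(x_0(x,y),y_0(x,y))$, because the explicit formulas \eqref{eq:x0}–\eqref{eq:y0} identify $(x_0,y_0)$ as the foot of the perpendicular from $(x,y)$ onto the line through the endpoints $(a_1,b_1),(a_2,b_2)$ of $e$, and the right-hand side of \eqref{eq:Gradg} is the tangential derivative of $g-g_h$ along $e$. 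In the configuration of Figure \ref{fig:divide-T} this foot lies on the closed segment $e$ for every $(x,y)\in T$, so $(g-g_h)'(x_0,y_0)$ is well defined; it is square integrable once $g'-g_h'\in L^2(e)$, which holds since $g|_e\in H^1(e)$ and $g_h|_e$ is affine.

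Next I would pass to an orthonormal coordinate system $(\xi,\eta)$ obtained from $(x,y)$ by a rigid motion chosen so that $e$ lies on the $\xi$–axis, with its endpoints at $\xi=0$ and $\xi=h_e$ and with $T\subset\{\eta\ge 0\}$. A rigid motion has unit Jacobian, so $dx\,dy=d\xi\,d\eta$; in these coordinates the orthogonal projection onto the line of $e$ is simply $(\xi,\eta)\mapsto(\xi,0)$, and $\xi\in[0,h_e]$ is the arclength parameter on $e$. Writing $\varphi(\xi):=|(g-g_h)'|$ at the point of $e$ with parameter $\xi$, the integrand equals $\varphi(\xi)^2$, independent of $\eta$, so Fubini gives
\begin{align*}
\int_T|\nabla(g-g_h)(x_0(x,y),y_0(x,y))|^2\,dx\,dy=\int_0^{h_e}\varphi(\xi)^2\,\ell(\xi)\,d\xi,
\end{align*}
where $\ell(\xi)$ is the length of the slice $\{\eta:(\xi,\eta)\in T\}$. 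Since every such slice is contained in an interval of length at most the height of $T$ above $e$, namely $2|T|/h_e$, and since shape–regularity of $\cT_h$ yields $|T|\le Ch_T^2\le Ch_e^2$ (all edges of a shape–regular triangle being comparable to $h_T$), we get $\ell(\xi)\le Ch_e$ uniformly, whence
\begin{align*}
\int_T|\nabla(g-g_h)(x_0(x,y),y_0(x,y))|^2\,dx\,dy\le Ch_e\int_0^{h_e}\varphi(\xi)^2\,d\xi=Ch_e\|g'-g_h'\|_{L^2(e)}^2,
\end{align*}
which is the assertion.

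The computations are routine; the points that need care are geometric rather than analytic. The main ones are: verifying from \eqref{eq:x0}–\eqref{eq:y0} that $(x_0,y_0)$ is indeed the orthogonal projection of $(x,y)$ onto the line of $e$ and that it remains inside the segment $e$ (which, as in the computation preceding the statement of this lemma, uses that the interior angles of $T$ at the two endpoints of $e$ are bounded away from $\pi/2$, so that the projection of $T$ onto that line is precisely $e$); and checking that the coordinate $\xi$ really is arclength on $e$, so that $\int_0^{h_e}\varphi(\xi)^2\,d\xi$ is exactly $\|g'-g_h'\|_{L^2(e)}^2$. The step where shape–regularity enters is the bound $\ell(\xi)\le Ch_e$, and it is precisely this slice–length estimate that produces the single power of $h_e$ in the final inequality.
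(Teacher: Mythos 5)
Your proposal is correct and follows essentially the same route as the paper: identify the integrand via \eqref{eq:Gradg} as the squared tangential derivative of $g-g_h$ evaluated at the orthogonal projection onto $e$, then integrate along the lines orthogonal to $e$ (where the integrand is constant) and use that each such slice has length at most $Ch_e$ by shape regularity. The paper compresses the second step into the single remark that ``the integrand is constant along the lines that are orthogonal to $e$''; your Fubini/slice-length argument is exactly the intended filling-in of that remark.
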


\begin{proof}
From the identity \eqref{eq:Gradg}, we find
\begin{align*}
&\int_T|\nabla(g-g_h)(x_0(x,y),y_0(x,y)|^2\,dxdy \\
&\qquad=\int_T \bigg|\hat t_1\frac{\partial(g-g_h)}{\partial
x_0}(x_0(x,y),y_0(x,y))+\hat t_2\frac{\partial(g-g_h)}{\partial
y_0}(x_0(x,y),y_0(x,y))\bigg|^2\,dxdy\\
&\qquad=\int_T |(g-g_h)^\prime(x_0(x,y),y_0(x,y))|^2\,dxdy.
\end{align*}
Now the proof follows from the observation that the integrand is constant along the lines that are orthogonal to $e$.
\end{proof}
We further prove the following lemma:
\begin{lemma}\label{lem:Gradg1}
Assume the hypothesis of Lemma $\ref{lem:Gradg}$, then there holds
\begin{align*}
\int_T\left|\bigg(\frac{h_1\nabla h_2-h_2\nabla h_1}{(h_1+h_2)^2}\bigg)(g-g_h)(x_0(x,y),y_0(x,y))\right|^2\,dxdy\leq C h_e\|(g^\prime-g_h^\prime)\|_{L^2(e)}^2.
\end{align*}
\end{lemma}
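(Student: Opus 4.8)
The plan is to bound the quantity pointwise by something controlled by $|(g-g_h)(x_0,y_0)|$ and then integrate, reducing everything to Lemma \ref{lem:Gradg}-type estimates (or rather to a zeroth-order analogue). First I would observe, using the explicit gradient formulas computed just above the statement, that $|\nabla h_1| = 1$ and $|\nabla h_2| = 1/\cos\theta_i$, which by the shape-regularity of $\cT_h$ is bounded by a constant $C$ independent of $T$. Since $h_1, h_2 \ge 0$ and $h_1 + h_2 = |z_1 - z_2| \le h_T$ is the full chord length along $\ell$, the triangle inequality gives
\begin{align*}
\left|\frac{h_1\nabla h_2 - h_2\nabla h_1}{(h_1+h_2)^2}\right|
\le \frac{h_1|\nabla h_2| + h_2|\nabla h_1|}{(h_1+h_2)^2}
\le \frac{C(h_1+h_2)}{(h_1+h_2)^2} = \frac{C}{h_1+h_2}.
\end{align*}
So the integrand is bounded by $C(h_1+h_2)^{-2}|(g-g_h)(x_0,y_0)|^2$.

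The remaining task is therefore to show $\int_T (h_1+h_2)^{-2}|(g-g_h)(x_0,y_0)|^2\,dxdy \le C h_e \|g'-g_h'\|_{L^2(e)}^2$. Here $(h_1+h_2)(x,y)$ is the length of the chord of $T$ through $(x,y)$ orthogonal to $e$, and $(g-g_h)(x_0,y_0)$ depends only on the foot point $x_0 \in e$, i.e. it is constant along each such chord. I would change variables to coordinates $(s,t)$ aligned with $e$ and its normal, so that $s$ runs along $e$ over an interval of length $h_e$ and, for fixed $s$, $t$ runs over $[0, L(s)]$ where $L(s) = (h_1+h_2)(s)$ is the chord length. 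Then
\begin{align*}
\int_T \frac{|(g-g_h)(x_0,y_0)|^2}{(h_1+h_2)^2}\,dxdy
= \int_e \frac{|(g-g_h)(s)|^2}{L(s)^2}\,L(s)\,ds
= \int_e \frac{|(g-g_h)(s)|^2}{L(s)}\,ds.
\end{align*}
Since $g_h$ interpolates $g$ at the endpoints of $e$, we have $(g-g_h)(s) = \int$ (a one-dimensional integral of $g'-g_h'$ against a kernel vanishing at the endpoints), so by a one-dimensional Poincaré/Friedrichs inequality on the edge, $\|(g-g_h)\|_{L^\infty(e)}^2 \le C h_e \|g'-g_h'\|_{L^2(e)}^2$ and indeed $\|(g-g_h)\|_{L^2(e)}^2 \le C h_e^2 \|g'-g_h'\|_{L^2(e)}^2$. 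Combining, $\int_e |(g-g_h)(s)|^2 L(s)^{-1}\,ds \le \|(g-g_h)\|_{L^\infty(e)}^2 \int_e L(s)^{-1}\,ds$.

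The main obstacle is controlling $\int_e L(s)^{-1}\,ds$: the chord length $L(s)$ degenerates to $0$ as $s$ approaches the vertex or vertices of $T$ lying on $e$, so $L(s)^{-1}$ is not integrable in general, and a naive bound fails. The fix is that near such a vertex the factor $|(g-g_h)(s)|^2$ also vanishes — it is $O((s - s_{\mathrm{vertex}})^2)$ by the interpolation property — and this quadratic vanishing beats the at-worst linear vanishing of $L(s)$ (which, by shape-regularity, satisfies $L(s) \ge c\,\mathrm{dist}(s, \text{opposite ends})$ with angle-dependent constant). So rather than splitting off $\|(g-g_h)\|_{L^\infty}$ crudely, I would keep $(g-g_h)(s)^2/L(s)$ together, use $L(s) \ge c\,\min(s - s_1, s_2 - s)$ type lower bounds from the minimal-angle condition, and estimate $\int_e (g-g_h)(s)^2 L(s)^{-1} ds$ via a weighted Hardy inequality on the interval $e$, yielding the clean bound $C h_e \|g'-g_h'\|_{L^2(e)}^2$. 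This Hardy-type step, and tracking that all constants depend only on $\theta_0$, is the delicate part; the rest is bookkeeping with the explicit formulas already established.
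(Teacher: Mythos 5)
Your proposal is correct and follows essentially the same route as the paper: the same pointwise bound $\bigl|h_1\nabla h_2-h_2\nabla h_1\bigr|/(h_1+h_2)^2\leq C/(h_1+h_2)$ obtained from $|\nabla h_1|=1$ and $|\nabla h_2|=1/\cos\theta_i$ with shape regularity, followed by the same reduction of $\int_T (h_1+h_2)^{-2}(g-g_h)^2(x_0,y_0)\,dx\,dy$ to a one-dimensional integral over $e$ weighted by the reciprocal chord length, with the degeneration of the chord at the endpoints of $e$ compensated by the vanishing of $g-g_h$ there. The only cosmetic difference is the last step, which you flag as delicate and propose to settle by a weighted Hardy inequality: the paper instead splits $T=T_1\cup T_2$ at the foot of the altitude and uses $(g-g_h)(x,b_1)=\int_{a_1}^x(g-g_h)'(t,b_1)\,dt$ with Cauchy--Schwarz to get $|(g-g_h)(x,b_1)|^2\leq |x-a_1|\,\|(g-g_h)'\|_{L^2(e)}^2$, which cancels the $1/(x-a_1)$ weight exactly, so the step you were worried about is in fact elementary.
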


\begin{proof}
For simplicity denote $x_0(x,y)$ by $x_0$ and $y_0(x,y)=y_0$. Then
\begin{align*}
&\int_T\left|\bigg(\frac{h_1\nabla h_2-h_2\nabla h_1}{(h_1+h_2)^2}\bigg)(g-g_h)(x_0,y_0)\right|^2\,dxdy \\
&\qquad=\int_T\bigg(\frac{(\nabla h_2\cdot\nabla h_2)h_1^2+(\nabla
h_1\cdot\nabla h_1)h_2^2-2h_2h_1\nabla h_2\cdot\nabla
h_1}{(h_1+h_2)^4}\bigg)(g-g_h)^2(x_0,y_0)\,dxdy\\
&\qquad \leq\int_T\bigg(\frac{(\cos\theta_i)^{-2}h_1^2+h_2^2+2h_2h_1}{(h_1+h_2)^4}\bigg)(g-g_h)^2(x_0,y_0)\,dxdy\\
&\qquad\leq\frac{1}{(\cos\theta_i)^2}\int_T\bigg(\frac{h_1^2+h_2^2+2h_2h_1}{(h_1+h_2)^4}\bigg)(g-g_h)^2(x_0,y_0)\,dxdy\\
&\qquad \leq\frac{1}{(\cos\theta_i)^2}\int_T\frac{(g-g_h)^2(x_0,y_0)}{(h_1+h_2)^2}\,dxdy\\
&\qquad=\frac{1}{(\cos\theta_i)^2}\Bigg(\int_{T_1}\frac{(g-g_h)^2(x_0,y_0)}{(h_1+h_2)^2}dxdy+\int_{T_2}\frac{(g-g_h)^2(x_0,y_0)}{(h_1+h_2)^2}dxdy\Bigg)
\end{align*}
\par
\noindent Without loss of generality, assume that the boundary
edge is parallel to the $x$-axis and consider the integral on
$T_1$.
\begin{align*}
\int_{T_1}\frac{(g-g_h)^2(x_0,y_0)}{(h_1+h_2)^2}dxdy
&=\int_{a_1}^{a_3}\int_{b_1}^{b_1+\big(\frac{b_3-b_1}{a_3-a_1}\big)(x-a_1)}\frac{(g-g_h)^2(x_0,y_0)}{\Big(\big(\frac{b_3-b_1}{a_3-a_1}\big)(x-a_1)\Big)^2}\,dxdy\\
&=\bigg(\frac{a_3-a_1}{b_3-b_1}\bigg)\int_{a_1}^{a_3}\frac{(g-g_h)^2(x,b_1)}{(x-a_1)}dx.
\end{align*}
Since $g\in H^1(\bar e)$, $g$ is absolutely continuous on $e$
\cite[Theorem 2.1.2]{Kesavan:book}. Then as $(g-g_h)(a_1,b_1)=0$,
we find using a property of absolutely continuous functions
\cite[Theorem 14 on page 110]{Royden:book} that for any $x\in
(a_1,a_3)$,
\begin{align*}
(g-g_h)(x,b_1)=\int_{a_1}^x(g-g_h)^\prime(t,b_1)\,dt.
\end{align*}
Subsequently, there holds
\begin{align*}
|(g-g_h)(x,b_1)|^2\leq |x-a_1|\;\|(g-g_h)^\prime\|_{L_2(e)}^2.
\end{align*}
Also since $(a_3-a_1)/(b_3-b_1)\leq C$ for some constant $C$ which
is independent of $T$, we obtain
\begin{align*}
\int_{T_1}\frac{(g-g_h)^2(x_0,y_0)}{(h_1+h_2)^2}dxdy \leq
\bigg(\frac{a_3-a_1}{b_3-b_1}\bigg)(a_3-a_1)\,\|(g-g_h)^\prime\|_{L_2(e)}^2
\leq C h_e \|(g-g_h)^\prime\|_{L_2(e)}^2.
\end{align*}
Using the same arguments, we obtain the similar estimate for $T_2$. Finally using the fact that $cos(\theta_i)\geq C$ for some positive constant, we conclude the proof.
\end{proof}

{\em Proof of } {\bf Theorem \ref{thm:Oscg}}: Using the equation
\eqref{eq:tuhderivative} and since
\begin{align*}
\bigg|\frac{h_2}{h_1+h_2}\bigg| \leq 1,
\end{align*}
we complete the proof by using the triangle inequality, Lemma
\ref{lem:Gradg} and Lemma \ref{lem:Gradg1}.

\section{A posteriori Error Analysis}\label{sec:ErrorAnalysis}
\par
\noindent In this section, we derive a reliable and efficient a
posteriori error estimator. The error analysis can conveniently be
derived by using an appropriate residual. To this end, define the
residual $\cR_h:H^1_0(\O)\rightarrow \R$ by
\begin{align}
\cR_h(v):=(f,v)-a(\tilde u_h, v)-(\sigma_h,v)\label{eq:Residual1}
\end{align}
or equivalently, define
\begin{align}
\cR_h(v):=a(u-\tilde u_h, v)+\langle
\sigma-\sigma_h,v\rangle,\label{eq:Residual}
\end{align}
where $\tilde u_h$ is either the harmonic extension or the linear
extension as in Section \ref{sec:post-solu}.

\par
\noindent The following lemma establishes a relation between the
residual and the errors.

\begin{lemma}\label{lem:residual-error}
There holds
\begin{align*}
\|\nabla(u-u_h)\|^2+\|\sigma-\sigma_h\|_{-1}^2\leq 6\,
\|\cR_h\|_{-1}^2-8 \,\langle \sigma-\sigma_h,u-\tilde u_h\rangle
+2\, \|\nabla(u_h-\tilde u_h)\|^2.
\end{align*}
\end{lemma}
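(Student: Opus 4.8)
The plan is to expand the norm of the residual using its two equivalent representations and to exploit the variational inequalities for $u$ and for the discrete problem to bound the cross terms. The starting point is the identity \eqref{eq:Residual}, which gives, for every $v\in H^1_0(\Omega)$,
\[
\cR_h(v)=a(u-\tilde u_h,v)+\langle\sigma-\sigma_h,v\rangle
= a(u-u_h,v)+a(u_h-\tilde u_h,v)+\langle\sigma-\sigma_h,v\rangle .
\]
First I would test with $v=u-u_h\in H^1_0(\Omega)$ (note that $u-u_h$ need not be admissible, but it does lie in $H^1_0$ because both $u$ and $u_h$ agree with $g_h$... more carefully, one works with the combination $u-\tilde u_h$ whose boundary trace vanishes, or splits off the oscillation term; the paper's $\cR_h$ is designed precisely so that the argument closes). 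The crucial structural input is that $\langle\sigma-\sigma_h,u-u_h\rangle \le 0$ up to the lumping error: from \eqref{eq:sigma} with $v=u_h$ minus an admissibility adjustment one gets $\langle\sigma,u_h-u\rangle\le 0$, and from \eqref{eq:DPProperty1}--\eqref{eq:DPProperty2} together with the sign of $\sigma_h$ one gets the discrete analogue $\langle\sigma_h,u-u_h\rangle\ge$ (something controllable). Combining these sign conditions produces a lower bound for $a(u-u_h,u-u_h)=\|\nabla(u-u_h)\|^2$ in terms of $\cR_h(u-u_h)$, the cross term $\langle\sigma-\sigma_h,u-\tilde u_h\rangle$, and $a(u_h-\tilde u_h,u-u_h)$.

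Next I would treat the dual norm $\|\sigma-\sigma_h\|_{-1}$. By definition of the $H^{-1}$ norm, for any $v\in H^1_0(\Omega)$ with $\|\nabla v\|=1$,
\[
\langle\sigma-\sigma_h,v\rangle = \cR_h(v) - a(u-u_h,v) - a(u_h-\tilde u_h,v)
\le \|\cR_h\|_{-1} + \|\nabla(u-u_h)\| + \|\nabla(u_h-\tilde u_h)\|,
\]
so taking the supremum gives $\|\sigma-\sigma_h\|_{-1}\le \|\cR_h\|_{-1}+\|\nabla(u-u_h)\|+\|\nabla(u_h-\tilde u_h)\|$. Squaring this and using Young's inequality $(a+b+c)^2\le 3(a^2+b^2+c^2)$ (or a weighted version) converts it into a bound of the form $\|\sigma-\sigma_h\|_{-1}^2 \le 3\|\cR_h\|_{-1}^2 + 3\|\nabla(u-u_h)\|^2 + 3\|\nabla(u_h-\tilde u_h)\|^2$. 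Adding this to the energy-error estimate from the first step, then absorbing the $\|\nabla(u-u_h)\|^2$ contribution that appears on the right into the left-hand side (this is where the specific constants $6$, $8$, $2$ in the statement come from — they are whatever survives the Young's-inequality bookkeeping), yields the claimed inequality.

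The main obstacle I anticipate is the careful handling of the sign/complementarity terms: one must correctly juggle the continuous complementarity \eqref{eq:sigma} (valid only for $v-u\in H^1_0$) against the discrete conditions \eqref{eq:DPProperty1}--\eqref{eq:DPProperty2}, and in doing so account for (i) the fact that $u-u_h\notin\cK-u$ in general, so \eqref{eq:sigma} cannot be applied directly with $v=u_h$, and (ii) the lumped-mass quadrature error \eqref{eq:NumerQuad} entering through $\langle\sigma_h,\cdot\rangle_h$ versus $(\sigma_h,\cdot)$. I expect the paper resolves (i) by testing with admissible functions built from $\max\{u_h,\chi^+\}$ or similar and absorbing the discrepancy, and (ii) by noting the quadrature term is higher order and hiding it in constants. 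Everything else — the expansion of $\cR_h$, Cauchy–Schwarz, and Young's inequality — is routine, so the whole proof should be short once the complementarity bookkeeping is set up.
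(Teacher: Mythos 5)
Your proposal misreads what this lemma is doing, and as a result most of the work you plan is aimed at the wrong target. The term $-8\,\langle\sigma-\sigma_h,u-\tilde u_h\rangle$ is \emph{kept} on the right-hand side of the stated inequality; it is estimated only later (in Lemma~\ref{lem:Lower}, using $\hat u_h=\max\{\tilde u_h,\chi\}$ and the complementarity conditions). So your entire first paragraph --- the sign conditions, the discrete complementarity, the lumped-mass quadrature error --- is not needed here and, worse, cannot be carried out as sketched: $u-u_h\notin H^1_0(\Omega)$ precisely because $u=g$ while $u_h=g_h\neq g$ on $\partial\Omega$ (this is the central difficulty the whole paper addresses), so \eqref{eq:sigma} cannot be tested with $u_h$ and $v=u-u_h$ is not an admissible test function for $\cR_h$. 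The paper's proof is purely algebraic: take $v=u-\tilde u_h\in H^1_0(\Omega)$ in \eqref{eq:Residual} (this is exactly why $\tilde u_h$ was constructed to satisfy the exact boundary data), obtain
$\|\nabla(u-\tilde u_h)\|^2=\cR_h(u-\tilde u_h)-\langle\sigma-\sigma_h,u-\tilde u_h\rangle\leq\|\cR_h\|_{-1}^2-2\langle\sigma-\sigma_h,u-\tilde u_h\rangle$
by Young's inequality, then use the triangle inequality $\|\nabla(u-u_h)\|^2\leq 2\|\nabla(u-\tilde u_h)\|^2+2\|\nabla(u_h-\tilde u_h)\|^2$ and, for the multiplier, the two-term bound $\|\sigma-\sigma_h\|_{-1}\leq\|\cR_h\|_{-1}+\|\nabla(u-\tilde u_h)\|$.

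Your handling of the dual norm also breaks down quantitatively. Splitting $a(u-\tilde u_h,v)$ into $a(u-u_h,v)+a(u_h-\tilde u_h,v)$ gives a three-term bound and hence a factor $3$ in front of $\|\nabla(u-u_h)\|^2$ on the right; you cannot absorb $3\|\nabla(u-u_h)\|^2$ into a left-hand side that carries it with coefficient $1$, and if you instead substitute the energy estimate into that term you end up with constants strictly larger than $6$, $8$, $2$, so the statement as written would not be proved. Keeping $u-\tilde u_h$ as a single block (two terms, factor $2$) and then substituting the bound \eqref{eq:resi1-error} is what yields exactly $\|\sigma-\sigma_h\|_{-1}^2\leq 4\|\cR_h\|_{-1}^2-4\langle\sigma-\sigma_h,u-\tilde u_h\rangle$ and, after summing with the energy bound, the advertised constants.
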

\begin{proof}
From \eqref{eq:Residual} and Young's inequality, we find
\begin{align*}
\|\nabla(u-\tilde u_h)\|^2 &= a(u-\tilde u_h, u-\tilde
u_h)=\cR_h(u-\tilde u_h)-\langle \sigma-\sigma_h, u-\tilde u_h\rangle\\
& \leq \|\cR_h\|_{-1}\;\|\nabla(u-\tilde u_h)\|-\langle
\sigma-\sigma_h,u-\tilde u_h\rangle\\
&\leq \frac{1}{2}\|\cR_h\|_{-1}^2+\frac{1}{2}\|\nabla(u-\tilde
u_h)\|^2-\langle \sigma-\sigma_h,u-\tilde u_h\rangle,
\end{align*}
which proves
\begin{align}\label{eq:resi1-error}
\|\nabla(u-\tilde u_h)\|^2 &\leq \|\cR_h\|_{-1}^2-2\langle
\sigma-\sigma_h,u-\tilde u_h\rangle.
\end{align}
Then the triangle inequality and Young's inequality imply
\begin{align}
\|\nabla(u-u_h)\|^2 &\leq 2 \|\nabla(u-\tilde u_h)\|^2+
2\|\nabla(u_h-\tilde u_h)\|^2 \notag\\
&\leq 2\|\cR_h\|_{-1}^2-4\langle \sigma-\sigma_h,u-\tilde
u_h\rangle+ 2\|\nabla(u_h-\tilde u_h)\|^2.\label{eq:resi2-error}
\end{align}
Again from \eqref{eq:Residual}, we find
\begin{align*}
\|\sigma-\sigma_h\|_{-1}\leq \|\cR_h\|_{-1}+ \|\nabla(u-\tilde
u_h)\|,
\end{align*}
then by Young's inequality and \eqref{eq:resi1-error},
\begin{align}
\|\sigma-\sigma_h\|_{-1}^2 &\leq 2\|\cR_h\|_{-1}^2 + 2
\|\nabla(u-\tilde u_h)\|^2 \notag\\
&\leq 4\|\cR_h\|_{-1}^2-4\langle \sigma-\sigma_h,u-\tilde
u_h\rangle. \label{eq:resi3-error}
\end{align}
 The proof then follows by combining the estimates in \eqref{eq:resi2-error}-\eqref{eq:resi3-error}.
\end{proof}

In order to derive the error analysis, we define the following
error estimators :
\begin{align*}
\eta_{f,T} &:=h_T\|f-\sigma_h\|_{L_2(T)},\\
\eta_{u_h,e} &:=h_e^{1/2}\|\sjump{\nabla u_h}\|_{L_2(e)},\\
\eta_{\sigma,T}&:=h_T^2\|\nabla \sigma_h\|_{L_2(T)},
\end{align*}
and
\begin{align*}
\eta_{f}^2 :=\sum_{T\in\cT_h}\eta_{f,T}^2,\qquad \eta_{J}^2
&:=\sum_{e\in\cE_h^i}\eta_{u_h,e}^2,\qquad
\eta_{\sigma}^2:=\sum_{T\in\cT_h}\eta_{\sigma,T}^2,\qquad
\eta_g^2:=\|\nabla(u_h-\tilde u_h)\|^2,
\end{align*}
and finally define
\begin{align*}
\eta^2&:=\eta_f^2+\eta_J^2+\eta_\sigma^2+\eta_g^2.
\end{align*}

\par
\noindent In the following lemma, we estimate the dual norm of the
residual $\cR_h$.

\begin{lemma}\label{lem:Rh}
There holds
\begin{align*}
\|\cR_h\|_{-1}\leq C\eta.
\end{align*}
\end{lemma}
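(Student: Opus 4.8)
The plan is to estimate $\|\cR_h\|_{-1} = \sup_{v \in H^1_0(\Omega),\,\|\nabla v\|=1} \cR_h(v)$ by taking an arbitrary $v \in H^1_0(\Omega)$, using the form \eqref{eq:Residual1} of the residual, and inserting a quasi-interpolant $I_h v \in V_h^0$ (e.g.\ the Scott--Zhang or Cl\'ement interpolant preserving the zero boundary condition). Since $I_h v \in V_h^0$ we may use the definition \eqref{eq:sigmah-def} of $\sigma_h$ to write $(f, I_h v) - a(u_h, I_h v) = \langle \sigma_h, I_h v\rangle_h$, and since on $T \in \cT_h^i$ we have $\tilde u_h = u_h$, the terms involving $\tilde u_h$ versus $u_h$ differ only on the boundary layer $\cT_h^b$, which is controlled by $\eta_g = \|\nabla(u_h - \tilde u_h)\|$. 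Thus the first step is the splitting
\begin{align*}
\cR_h(v) = \big[(f, v - I_h v) - a(u_h, v - I_h v) - (\sigma_h, v - I_h v)\big] + \big[(\sigma_h, I_h v) - \langle \sigma_h, I_h v\rangle_h\big] - a(\tilde u_h - u_h, v).
\end{align*}
The last term is immediately bounded by $\|\nabla(\tilde u_h - u_h)\| \, \|\nabla v\| = \eta_g$; the middle term is the mass-lumping consistency error, controlled via \eqref{eq:NumerQuad} by $\sum_T C h_T \|\nabla \sigma_h\|_{L^2(T)} \|\nabla I_h v\|_{L^2(T)}$, which after a local inverse estimate and stability of $I_h$ gives a bound by $C\,\eta_\sigma$ (noting $h_T\|\nabla\sigma_h\|$ combined with another $h_T$ from the inverse estimate $\|\nabla I_hv\|_{L^2(T)} \le Ch_T^{-1}\|I_hv\|$, or more directly handling it as an $h_T^2$-weighted term — this is where the $h_T^2$ weight in $\eta_{\sigma,T}$ comes from).

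The main work is the first bracket, the genuine residual $(f - \sigma_h, v - I_h v) - a(u_h, v - I_h v)$. Here I would integrate by parts element by element: on each $T$, $\int_T \nabla u_h \cdot \nabla(v - I_h v)\,dx = -\int_T \Delta u_h \,(v - I_h v)\,dx + \int_{\partial T} \partial_n u_h\,(v-I_h v)\,ds$, and since $u_h$ is piecewise linear $\Delta u_h|_T = 0$, so only the edge terms survive. Summing over $T$ and collecting contributions from the two triangles sharing each interior edge produces the jump terms $\int_e \sjump{\nabla u_h}\,(v - I_h v)\,ds$; boundary edges contribute nothing because $v - I_h v$ vanishes there. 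This yields
\begin{align*}
\cR_h^{(1)}(v) = \sum_{T \in \cT_h} \int_T (f - \sigma_h)(v - I_h v)\,dx - \sum_{e \in \cE_h^i} \int_e \sjump{\nabla u_h}(v - I_h v)\,ds.
\end{align*}
Then I apply the standard interpolation error estimates $\|v - I_h v\|_{L^2(T)} \le C h_T \|\nabla v\|_{L^2(\omega_T)}$ and $\|v - I_h v\|_{L^2(e)} \le C h_e^{1/2}\|\nabla v\|_{L^2(\omega_e)}$ (with $\omega_T, \omega_e$ the usual patches), apply Cauchy--Schwarz on each term, and sum using finite overlap of patches to get $\cR_h^{(1)}(v) \le C(\eta_f + \eta_J)\|\nabla v\|$.

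Combining the three pieces gives $\cR_h(v) \le C(\eta_f + \eta_J + \eta_\sigma + \eta_g)\|\nabla v\| \le C\eta\|\nabla v\|$, and taking the supremum over $v$ with $\|\nabla v\| = 1$ finishes the proof. The step I expect to be the most delicate is handling the mass-lumping term correctly to land exactly on $\eta_\sigma = (\sum_T h_T^4\|\nabla\sigma_h\|_{L^2(T)}^2)^{1/2}$ rather than a weaker or stronger power of $h_T$: the quadrature estimate \eqref{eq:NumerQuad} gives one factor $h_T$ and pairs $\nabla\sigma_h$ with $\nabla I_h v$, and one must then spend a factor $h_T$ from an inverse inequality $\|\nabla(I_h v)\|_{L^2(T)}$ versus the $L^2$-localizable quantity — but since $v-I_hv$ is the naturally controlled quantity, one actually writes $I_h v = v - (v - I_h v)$, uses $\|\nabla v\|$ directly for the $v$-part... the cleanest route is to observe $\langle \sigma_h, I_h v\rangle_h - (\sigma_h, I_h v)$ only sees the piecewise-linear structure and bound it by $Ch_T\|\nabla\sigma_h\|_{L^2(T)}\|\nabla I_hv\|_{L^2(T)} \le C h_T^2 \|\nabla\sigma_h\|_{L^2(T)} \cdot h_T^{-1}\|\nabla I_h v\|_{L^2(T)}$ is circular, so instead one keeps it as $\le Ch_T\|\nabla\sigma_h\|_{L^2(T)}\|\nabla v\|_{L^2(\omega_T)}$ by stability of $I_h$ in the $H^1$-seminorm — giving an $h_T^1$ weight, i.e.\ a term like $\eta_f$ rather than $\eta_\sigma$; reconciling this with the stated definition of $\eta_\sigma$ is the point that needs care, and likely the paper exploits that $\sigma_h$ lives in $V_h^0$ so an extra power of $h_T$ is available via a Poincar\'e/inverse estimate localized to the patch.
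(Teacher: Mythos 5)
Your overall route is the same as the paper's (which only sketches this proof): insert a boundary-condition-preserving Cl\'ement interpolant $I_hv\in V_h^0$, use \eqref{eq:sigmah-def} to trade $(f,I_hv)-a(u_h,I_hv)$ for $\langle\sigma_h,I_hv\rangle_h$, integrate by parts elementwise using $\Delta u_h|_T=0$ to produce $\eta_f$ and $\eta_J$, and absorb $a(u_h-\tilde u_h,v)$ into $\eta_g$. One harmless slip: with $\langle\sigma_h,I_hv\rangle_h=(f,I_hv)-a(u_h,I_hv)$, your three brackets sum to $\cR_h(v)+2\bigl[(\sigma_h,I_hv)-\langle\sigma_h,I_hv\rangle_h\bigr]$, so the middle bracket must carry the opposite sign, namely $\langle\sigma_h,I_hv\rangle_h-(\sigma_h,I_hv)$; since that term is estimated in absolute value this does not affect the final bound.

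The issue you flag at the end is the one genuinely delicate point, and your proposed rescue does not work: the inverse estimate $\|\nabla\sigma_h\|_{L^2(T)}\leq Ch_T^{-1}\|\sigma_h\|_{L^2(T)}$ goes in the wrong direction, and no Poincar\'e inequality is available for $\sigma_h$ on a generic $T$ because $\sigma_h$ need not vanish there. The correct resolution is that the quadrature error is one power of $h_T$ better than \eqref{eq:NumerQuad} as printed. The functional $E_T(\phi):=\frac{|T|}{3}\sum_{z\in\cV_T}\phi(z)-\int_T\phi\,dx$ annihilates $\bbP_1(T)$; for $v_h,w_h\in\bbP_1(T)$ the products $\bar v_h w_h$ and $v_h\bar w_h$ (bars denoting mean values on $T$) lie in $\bbP_1(T)$, hence
\begin{align*}
|E_T(v_hw_h)|=\bigl|E_T\bigl((v_h-\bar v_h)(w_h-\bar w_h)\bigr)\bigr|\leq 2|T|\,\|v_h-\bar v_h\|_{L^\infty(T)}\|w_h-\bar w_h\|_{L^\infty(T)}\leq Ch_T^2\|\nabla v_h\|_{L^2(T)}\|\nabla w_h\|_{L^2(T)},
\end{align*}
using that $\nabla v_h$, $\nabla w_h$ are constant on $T$. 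With this sharp form and the $H^1$-stability of $I_h$, the mass-lumping term is bounded by $C\bigl(\sum_Th_T^4\|\nabla\sigma_h\|_{L^2(T)}^2\bigr)^{1/2}\|\nabla v\|=C\eta_\sigma\|\nabla v\|$, landing exactly on the stated weight $\eta_{\sigma,T}=h_T^2\|\nabla\sigma_h\|_{L^2(T)}$. (Taking \eqref{eq:NumerQuad} literally with the single factor $h_T$ one only obtains $\bigl(\sum_Th_T^2\|\nabla\sigma_h\|_{L^2(T)}^2\bigr)^{1/2}$, which is not dominated by $\eta_\sigma$; the exponent there should be read as $h_T^2$.) With that correction your argument is complete and coincides with the paper's.
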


\begin{proof}
From \eqref{eq:Residual1} and \eqref{eq:sigmah-def}, we note that
\begin{align*}
\cR_h(v)&=(f,v-v_h)-a(u_h, v-v_h)-(\sigma_h,v-v_h)+\langle
\sigma_h,v_h \rangle_h-(\sigma_h,v_h)+ a(u_h-\tilde u_h, v),
\end{align*}
for any $v_h\in V_h$. Let $v_h$ be the Clement interpolation of
$v$. Then the proof now follows by integration by parts, the
approximation properties of $v_h$ and the estimate in
\eqref{eq:NumerQuad}.
\end{proof}

\par
\noindent It is now remaining to estimate $-\langle
\sigma-\sigma_h,u-\tilde u_h\rangle$. The error estimator depends
on the following sets defined by
\begin{align*}
\bF_h :=\{ T\in\cT_h: \exists\; z_1,\;z_2\in\cV_T \text{ such that
} u_h(z_1)=\chi(z_1) \text{ and } u_h(z_2)>\chi(z_2)\},
\end{align*}
\begin{align*}
\bC_h:=\{ T\in\cT_h: \text{For all } z\in\cV_T,\;
u_h(z)=\chi(z)\},
\end{align*}
and
\begin{align*}
\bN_h :=\{ T\in\cT_h: \text{For all } z\in\cV_T,\;
u_h(z)>\chi(z)\}.
\end{align*}
We call the sets $\bF_h$, $\bC_h$ and $\bN_h$ as free boundary
set, contact set and non-contact set, respectively. We also define
\begin{align*}
\p\bF_h^i :=\{e\in \cE_h^i: e\in \cE_z, z\in \cV_T \text{  with  }
u_h|_T(z)=\chi_h(z),\; T \in \bF_h\},
\end{align*}
the set of all interior edges that share a vertex of a triangle in
$\bF_h$.

\par
\noindent We prove the following lemma in the spirit of
\cite{Veeser:2001:VI,TG:2014:VIDG}:

\begin{lemma}\label{lem:Lower}
Let $\chi_h\in V_h$ be the Lagrange interpolation of $\chi$. Then
there holds
\begin{align*}
\langle \sigma-\sigma_h, u-\tilde u_h\rangle &\geq  -\epsilon
\|\sigma-\sigma_h\|_{-1}^2-\epsilon\|\nabla(u_h-
u)\|^2-\frac{C}{\epsilon}\|\nabla(\chi-\tilde
u_h)^+\|^2\\&-C\|\nabla(u_h- \tilde u_h)\|^2 -\frac{C}{\epsilon}
\sum_{T\in\cT_h} h_T^4 \|\nabla
\sigma_h(u_h)\|^2+C \sum_{T\in\bF_h\cup\bC_h}\int_T\sigma_h(\chi-\chi_h)^-\,dx\\
&-C\sum_{e\in\p\bF_h^i}\int_e h_e \sjump{\nabla
(u_h-\chi_h)}^2\,ds.
\end{align*}
\end{lemma}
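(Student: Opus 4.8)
The plan is to bound $\langle \sigma-\sigma_h, u-\tilde u_h\rangle$ from below by splitting it as $\langle \sigma, u-\tilde u_h\rangle - \langle \sigma_h, u-\tilde u_h\rangle$ and handling the two terms separately. For the first term, I would like to exploit the sign condition \eqref{eq:sigma}, which says $\langle \sigma, v-u\rangle \le 0$ for all $v\in\cK$; rewriting $u-\tilde u_h = (u-w) + (w - \tilde u_h)$ for a cleverly chosen $w\in\cK$ (a natural candidate is $w = \max\{\tilde u_h, \chi\}$, or a modification that repairs the boundary values so it lies in $\cK$), one gets $\langle \sigma, u-w\rangle \ge 0$ and is left with $\langle \sigma, w-\tilde u_h\rangle$, which is controlled pointwise by $(\chi - \tilde u_h)^+$ since $w - \tilde u_h = (\chi-\tilde u_h)^+$ away from boundary corrections. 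Applying the definition \eqref{eq:sigmadef} of $\sigma$ together with Cauchy--Schwarz and Young's inequality turns $\langle \sigma, w-\tilde u_h\rangle$ into terms of the form $-\frac{C}{\epsilon}\|\nabla(\chi-\tilde u_h)^+\|^2 - \epsilon\|\nabla(u-u_h)\|^2$ plus data terms, which matches the target.

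For the second term, $-\langle \sigma_h, u-\tilde u_h\rangle$, I would use the discrete complementarity properties: $\sigma_h(z)\ge 0$ for all $z$, and $\sigma_h(z) = 0$ whenever $u_h(z) > \chi(z)$. So on non-contact triangles in $\bN_h$ the relevant nodal contribution of $\sigma_h$ vanishes, and the surviving contributions localize to $\bF_h \cup \bC_h$. I would compare $\langle \sigma_h, u-\tilde u_h\rangle$ to $\langle \sigma_h, u - u_h\rangle$ (paying a $C\|\nabla(u_h-\tilde u_h)\|^2$ penalty via Cauchy--Schwarz/Young, noting the discrete inner product versus $L^2$ inner product discrepancy is handled by \eqref{eq:NumerQuad} at the cost of the $h_T^4\|\nabla\sigma_h\|^2$ terms), then rewrite $u-u_h = (u - \chi) + (\chi - \chi_h) + (\chi_h - u_h)$. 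The term with $u-\chi \ge 0$ against $\sigma_h \ge 0$ has a favorable sign; the $(\chi - \chi_h)$ piece produces the $\int_T \sigma_h(\chi-\chi_h)^-$ term on $\bF_h\cup\bC_h$; and the $(\chi_h - u_h)$ piece, which measures the discrete obstacle violation, is the one localized near the free boundary, giving the edge term $\sum_{e\in\p\bF_h^i} h_e \int_e \sjump{\nabla(u_h-\chi_h)}^2$ after an inverse/trace estimate relating the interior values of $u_h - \chi_h$ to its jumps across edges touching $\bF_h$.

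The main obstacle I anticipate is the construction of the comparison function $w\in\cK$: it must simultaneously (i) lie above $\chi$ a.e., (ii) take the exact boundary data $g$, and (iii) differ from $\tilde u_h$ only by something controlled by $(\chi - \tilde u_h)^+$ with gradient estimates. Since $\tilde u_h$ already satisfies the exact boundary condition by construction (Section \ref{sec:post-solu}), taking $w = \max\{\tilde u_h, \chi\}$ should work provided $\chi|_{\partial\Omega}\le g$ guarantees $w|_{\partial\Omega} = g$ — which is exactly the standing assumption on $\chi$. The second delicate point is the free-boundary localization: bounding the contribution of $\chi_h - u_h$ requires carefully identifying which nodes carry nonzero $\sigma_h$ (those with $u_h(z) = \chi(z)$) adjacent to nodes where $u_h - \chi_h$ is active, and converting the resulting nodal sums into the edge-jump estimator over $\p\bF_h^i$ using scaling and the shape-regularity of $\cT_h$; this is where most of the bookkeeping lives. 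The remaining steps — Cauchy--Schwarz, Young's inequality with parameter $\epsilon$, the numerical quadrature estimate \eqref{eq:NumerQuad}, and standard trace/inverse inequalities — are routine.
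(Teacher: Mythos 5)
Your plan is essentially the paper's own proof: the comparison function $\hat u_h=\max\{\tilde u_h,\chi\}\in\cK$ (admissible precisely because $\tilde u_h$ carries the exact boundary data and $\chi\le g$ on $\p\O$), the sign condition \eqref{eq:sigma} reducing the $\sigma$-part to $\langle\sigma,(\chi-\tilde u_h)^+\rangle$, localization of the $\sigma_h$-part to $\bF_h\cup\bC_h$ by discrete complementarity, the $\|\nabla(u_h-\tilde u_h)\|^2$ penalty on boundary triangles, the quadrature estimate \eqref{eq:NumerQuad} for the $h_T^4\|\nabla\sigma_h\|^2$ contributions, and a Veeser-type bound converting $\|u_h-\chi_h\|_{L_2(T)}$ into the edge jumps over $\p\bF_h^i$. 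Your three-way split $u-u_h=(u-\chi)+(\chi-\chi_h)+(\chi_h-u_h)$ is only a cosmetic reorganization of the paper's case analysis with $(\chi-\tilde u_h)^-$.

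Two details need tightening. First, the sign of $\sigma_h$: from \eqref{eq:DPProperty1} one gets $\langle\sigma_h,\psi_z\rangle_h=(f,\psi_z)-a(u_h,\psi_z)\le 0$, i.e.\ $\sigma_h(z)\le 0$ at every node (the displayed nodal inequality in Section \ref{sec:Prelims} has the sign reversed, and the paper's own proof uses $\sigma_h\le 0$). Your step ``$u-\chi\ge0$ against $\sigma_h\ge0$ has a favorable sign'' concerns the term $-\langle\sigma_h,u-\chi\rangle$, which is nonnegative (hence droppable in a lower bound) only if $\sigma_h\le 0$; as literally written it goes the wrong way. Second, the term $\langle\sigma,(\chi-\tilde u_h)^+\rangle$ should not be attacked ``directly from the definition \eqref{eq:sigmadef}'', which would produce $\|\nabla u\|$ rather than data terms; the route that yields the stated $-\epsilon\|\sigma-\sigma_h\|_{-1}^2$ is to split it as $\langle\sigma-\sigma_h,(\chi-\tilde u_h)^+\rangle+\langle\sigma_h,(\chi-\tilde u_h)^+\rangle$ and bound the first summand by the dual norm. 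The leftover $\langle\sigma_h,(\chi-\tilde u_h)^+\rangle$ has the unfavorable sign and cannot be discarded; it is absorbed using the local scaling $\|\sigma_h\|_{L_2(T)}\le Ch_T\|\nabla\sigma_h\|_{L_2(T)}$ (valid because $\sigma_h$ vanishes at some vertex of the relevant triangles), which is exactly where additional $h_T^4\|\nabla\sigma_h\|^2$ and $\|\nabla(\chi-\tilde u_h)^+\|^2$ contributions arise. With these two repairs your outline reproduces the paper's argument.
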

\begin{proof}
Let $\hat u_h =\max\{\tilde u_h,\chi\}$. Then $\hat u_h\in \cK$
and find
\begin{align*}
\langle \sigma, u-\tilde u_h\rangle &=\langle \sigma,(u-\hat
u_h)+\hat u_h-\tilde u_h)\rangle\\
&\geq \langle \sigma,\hat u_h-\tilde u_h\rangle,
\end{align*}
where we have used the fact that  $\langle \sigma,u-\hat
u_h\rangle \geq 0$, thanks to \eqref{eq:sigma}. Note by the
definition of $\hat u_h$ that $\hat u_h-\tilde u_h=(\chi-\tilde
u_h)^+$. Then
\begin{align*}
\langle \sigma,\hat u_h-\tilde u_h\rangle & = \langle
\sigma,(\chi-\tilde u_h)^+\rangle = \langle
\sigma-\sigma_h,(\chi-\tilde u_h)^+\rangle+\langle
\sigma_h,(\chi-\tilde u_h)^+\rangle\\
&\geq -\epsilon
\|\sigma-\sigma_h\|_{-1}^2-\frac{1}{\epsilon}\|\nabla(\chi-\tilde
u_h)^+\|^2+\langle \sigma_h,(\chi-\tilde u_h)^+\rangle,
\end{align*}
where $\epsilon >0$ is some constant which will be chosen later.
Therefore
\begin{align*}
\langle \sigma-\sigma_h, u-\tilde u_h\rangle &\geq -\epsilon
\|\sigma-\sigma_h\|_{-1}^2-\frac{1}{\epsilon}\|\nabla(\chi-\tilde
u_h)^+\|^2+\langle \sigma_h,(\chi-\tilde u_h)^+-(u-\tilde
u_h)\rangle.
\end{align*}
We consider
\begin{align*}
\langle \sigma_h,(\chi-\tilde u_h)^+-(u-\tilde u_h)\rangle
&=\sum_{T\in\cT_h}\int_T \sigma_h(\chi-\tilde u_h)^+-(u-\tilde
u_h)\,dx\\
&=\sum_{T\in\bF_h\cup\bC_h}\int_T \sigma_h(\chi-\tilde
u_h)^+-(u-\tilde u_h)\,dx,
\end{align*}
since $\sigma_h\equiv 0$ on $\bN_h$. We split the proof into a few
cases.

\par
\noindent {\em Case 1.}({\em Free boundary set near $\p\O$},
$\bF_h$): Let $ T\in \bF_h$ and $T\cap \p\O\neq \emptyset$. Then
since there is at least one node $z\in \cV_T$ such that
$\sigma_h|_T(z)=0$, we find by using equivalence of norms on
finite dimensional spaces and scaling that
\begin{align}
\|\sigma_h\|_{L_2(T)} &\leq C h_T
\|\nabla\sigma_h\|_{L_2(T)}.\label{eq:SigmaPoincare}
\end{align}
If $T\cap \p\O$ is an edge in $\cE_h^b$, then since $u-\tilde u_h
\in H^1_0(\Omega)$, we find by using Poincar\'e inequality and
scaling that
\begin{align}
\|\tilde u_h- u\|_{L_2(T)} &\leq C h_T \|\nabla(\tilde u_h-
u)\|_{L_2(T)}.\label{eq:FBB1}
\end{align}
In the case if $T\cap \p\O$ is a node in $\cV_h^b$, we find by
using the equivalence of norms on finite dimensional spaces  that
\begin{align*}
\|\Pi_h(\tilde u_h- u)\|_{L_2(T)} &\leq C h_T \|\nabla\Pi_h(\tilde
u_h- u)\|_{L_2(T)}\leq C h_T \|\nabla(\tilde u_h-
u)\|_{L_2(\cT_T)},
\end{align*}
where $\cT_T:=\cup\{T\in \cT_h: T'\cap T\neq\emptyset\}$ and
$\Pi_h: H^1_0(\Omega)\rightarrow V_h^0$ is the Clement
interpolation satisfying
\begin{align*}
h_T^{-1}\|\|\Pi_hv-v\|_{L_2(T)}+\|\nabla\Pi_hv\|_{L_2(T)}\leq C
\|\nabla v\|_{L_2(\cT_T)}\quad \forall v\in H^1_0(\Omega).
\end{align*}
Then, we find
\begin{align}
\|\tilde u_h-u\|_{L_2(T)} &\leq \|(\tilde u_h- u)-\Pi_h(\tilde
u_h- u)\|_{L_2(T)}+\|\Pi_h(\tilde u_h- u)\|_{L_2(T)}\notag\\
&\leq C h_T \|\nabla(\tilde u_h- u)\|_{L_2(\cT_T)}.\label{eq:FBB3}
\end{align}
\noindent Therefore, we obtain
\begin{align}
\int_T \sigma_h(\tilde u_h-u)\,dx &\geq
-\frac{\epsilon}{2}\|\nabla(\tilde u_h-
u)\|_{L_2(\cT_T)}^2-\frac{C}{\epsilon} h_T^4 \|\nabla
\sigma_h(u_h)\|_{L_2(T)}^2\notag\\
&\geq -\epsilon\|\nabla(\tilde
u_h-u_h)\|_{L_2(\cT_T)}^2-\epsilon\|\nabla(u_h-
u)\|_{L_2(\cT_T)}^2\notag\\& \qquad-C h_T^4 \|\nabla
\sigma_h(u_h)\|_{L_2(T)}^2. \label{eq:Lower2}
\end{align}
Similarly, we find that
\begin{align}
\int_T \sigma_h(\chi-\tilde u_h)^+\,dx &\geq -
\|\nabla(\chi-\tilde u_h)^+\|_{L_2(\cT_T)}^2-C h_T^4 \|\nabla
\sigma_h(u_h)\|_{L_2(T)}^2. \label{eq:Lower22}
\end{align}

\bigskip \noindent For the remaining proof, we note that
\begin{align*}
\int_T \sigma_h\big((\chi-\tilde u_h)^+-(u-\tilde u_h)\big)\,dx
&=\int_T
\sigma_h\big((\chi-\tilde u_h)^+-(\chi-\tilde u_h)\big)\,dx\notag\\
&\quad + \int_T \sigma_h(\chi-u)\,dx\notag\\
&\geq \int_T \sigma_h\big((\chi-\tilde u_h)^+-(\chi-\tilde
u_h)\big)\,dx,
\end{align*}
since $\sigma_h\leq 0$ and $\chi\leq   u$. Further since any
$\phi=\phi^+ - \phi^-$, where $\phi^-= \max\{-\phi,0\}$, we note
that
\begin{align}
\int_T \sigma_h \big((\chi-\tilde u_h)^+ -(u-\tilde u_h)\big)\,dx
= \int_T \sigma_h (\chi-\tilde u_h)^- \,dx.\label{eq:sigma-ine}
\end{align}

\par
\smallskip
{\em Case 2}({\em Free boundary set away from $\p\O$}, $\bF_h$):
Let $T\in \bF_h$ and $T\cap \p\O=\emptyset$. Then $\tilde
u_h\equiv u_h$ on $T$ and
\begin{align*}
\int_T\sigma_h (\chi-\tilde u_h)^- \,dx &= \int_T\sigma_h
(\chi-u_h)^- \,dx\\
&\geq \int_T\sigma_h (\chi-\chi_h)^- \,dx+ \int_T\sigma_h
(u_h-\chi_h) \,dx.
\end{align*}

\par
\noindent Since $T\in \bF_h$, there is some $z\in \cV_T$ such that
$u_h|_T(z)=\chi(z)$. Also since $ u_h\geq \chi_h$, we find by
using \cite[Lemma 3.6]{Veeser:2001:VI} that
\begin{align}
\|u_h-\chi_h\|_{L_2(T)}\leq C h_T \left(\sum_{e\in\cE_p\cap
\cE_h^i}\int_e h_e \sjump{\nabla (
u_h-\chi_h)}^2\,ds\right)^{1/2}.\label{eq:Free2}
\end{align}
Also since $\sigma_h|_T(z)=0$ for at least one $z\in\cV_T$, we
find using \eqref{eq:SigmaPoincare} and \eqref{eq:Free2} that
\begin{align*}
\big|\int_T \sigma_h (u_h-\chi_h)\,dx\big|
&\leq  \|\sigma_h\|_{L_2(T)} \|  u_h-\chi_h \|_{L_2(T)} \notag\\
&\leq C h_T^2 \|\nabla
\sigma_h\|_{L_2(T)}\left(\sum_{e\in\cE_p\cap \cE_h^i}\int_e h_e
\sjump{\nabla (u_h-\chi_h)}^2\,ds\right)^{1/2},
\end{align*}
which implies that
\begin{align}
\int_T \sigma_h( u_h-\chi_h)\,dx &\geq -C h_T^4 \|\nabla
\sigma_h\|_{L_2(T)}^2 -C\sum_{e\in\cE_p\cap \cE_h^i}\int_e h_e
\sjump{\nabla (u_h-\chi_h)}^2\,ds. \label{eq:Lower3}
\end{align}

\par
\smallskip
{\em Case 3}({\em Contact set away from $\p\O$}, $\bC_h$): Let
$T\in \bC_h$ and $T\cap \p\O=\emptyset$. Then $\tilde u_h\equiv
u_h\equiv \chi_h$ on $T$ and
\begin{align*}
\int_T\sigma_h (\chi-\tilde u_h)^- \,dx = \int_T\sigma_h
(\chi-u_h)^- \,dx=\int_T\sigma_h (\chi-\chi_h)^- \,dx.
\end{align*}

\par
\smallskip
{\em Case 4}({\em Contact set near $\p\O$}, $\bC_h$): Let $T\in
\bC_h$ and $T\cap \p\O\neq \emptyset$. Then $u_h\equiv\chi_h$ on
$T$ and
\begin{align*}
(\chi-\tilde u_h)^- &=\max\{\tilde u_h-\chi,0\}=\max\{\tilde
u_h-u_h+\chi_h-\chi,0\}\\
&\leq \max\{\tilde u_h-u_h,0\}+\max\{\chi_h-\chi,0\}\\
&=(\tilde u_h-u_h)^+ +(\chi-\chi_h)^-.
\end{align*}
Therefore
\begin{align*}
\int_T\sigma_h (\chi-\tilde u_h)^- \,dx \geq  \int_T\sigma_h
(\tilde u_h-u_h)^+ \,dx+\int_T\sigma_h (\chi-\chi_h)^- \,dx.
\end{align*}
Further since $\sigma_h(z)=0$ for $z\in\p T\cap \p\O$,
we find by using equivalence of norms on
finite dimensional spaces and scaling as in \eqref{eq:SigmaPoincare} that
\begin{align*}
\|\sigma_h\|_{L_2(T)} &\leq C h_T
\|\nabla\sigma_h\|_{L_2(T)}.
\end{align*}
and similar to \eqref{eq:FBB1}-\eqref{eq:FBB3}, we find that
\begin{align*}
\|(\tilde u_h-u_h)^+\|_{L_2(T)} \leq \|\tilde u_h-u_h\|_{L_2(T)}
&\leq C h_T \|\nabla(\tilde u_h- u_h)\|_{L_2(\cT_T)}.
\end{align*}
This completes the rest of the proof.
\end{proof}
We combine Lemma \ref{lem:residual-error}, Lemma \ref{lem:Rh},
Lemma \ref{lem:Lower} and deduce the following main result of the
article.

\begin{theorem}\label{thm:Reliable}
Let $\chi_h\in V_h$ be the Lagrange interpolation of $\chi$ and
$g|_e\in H^1(e)$ for all $e\in\cE_h^b$. Then  there holds
\begin{align*}
\|\nabla(u-u_h)\|^2 +\|\sigma-\sigma_h\|_{-1}^2 &\leq C (\eta_f^2+\eta_J^2+\eta_\sigma^2)+C \sum_{e\in \cE_h^b} h_e\|(g-g_h)^\prime\|_{L_2(e)}^2\\
&\quad+C\|\nabla(\chi-\tilde u_h)^+\|^2+C \sum_{T\in\bF_h\cup\bC_h}\int_T(-\sigma_h)(\chi-\chi_h)^-\,dx\\
&\quad +C\sum_{e\in\p\bF_h^i}\int_e h_e \sjump{\nabla
(u_h-\chi_h)}^2\,ds.
\end{align*}
\end{theorem}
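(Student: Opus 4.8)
The plan is to assemble the three preparatory lemmas into the asserted bound. First I would invoke Lemma~\ref{lem:residual-error} to control $\|\nabla(u-u_h)\|^2+\|\sigma-\sigma_h\|_{-1}^2$ by $6\|\cR_h\|_{-1}^2 - 8\langle \sigma-\sigma_h, u-\tilde u_h\rangle + 2\|\nabla(u_h-\tilde u_h)\|^2$. The middle term is the one that needs real work; the first is handled immediately by Lemma~\ref{lem:Rh}, which gives $\|\cR_h\|_{-1}^2 \leq C\eta^2 = C(\eta_f^2+\eta_J^2+\eta_\sigma^2+\eta_g^2)$, and the last term is exactly $2\eta_g^2 = 2\|\nabla(u_h-\tilde u_h)\|^2$, which I will eventually dispose of via Theorem~\ref{thm:Oscg} (or the harmonic-extension Proposition) to replace it by $C\sum_{e\in\cE_h^b} h_e\|(g-g_h)'\|_{L^2(e)}^2$.

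Second, I would substitute the lower bound from Lemma~\ref{lem:Lower} for $-8\langle\sigma-\sigma_h, u-\tilde u_h\rangle$, i.e.\ use $-\langle\sigma-\sigma_h,u-\tilde u_h\rangle \leq \epsilon\|\sigma-\sigma_h\|_{-1}^2 + \epsilon\|\nabla(u_h-u)\|^2 + \tfrac{C}{\epsilon}\|\nabla(\chi-\tilde u_h)^+\|^2 + C\|\nabla(u_h-\tilde u_h)\|^2 + \tfrac{C}{\epsilon}\sum_T h_T^4\|\nabla\sigma_h\|^2 - C\sum_{T\in\bF_h\cup\bC_h}\int_T \sigma_h(\chi-\chi_h)^-\,dx + C\sum_{e\in\p\bF_h^i}\int_e h_e\sjump{\nabla(u_h-\chi_h)}^2\,ds$. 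The key algebraic point is that the $h_T^4\|\nabla\sigma_h\|^2$ terms are precisely $\eta_\sigma^2$, the jump terms over $\p\bF_h^i$ and the contact terms $\int_T(-\sigma_h)(\chi-\chi_h)^-$ are already on the right-hand side of the target inequality, and $\|\nabla(\chi-\tilde u_h)^+\|^2$ appears there too.

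Third comes the absorption step, which I expect to be the main (though routine) obstacle: the terms $8\epsilon\|\sigma-\sigma_h\|_{-1}^2$ and $8\epsilon\|\nabla(u_h-u)\|^2$ produced by Lemma~\ref{lem:Lower} must be absorbed into the left-hand side. Since the left side already carries a full $\|\nabla(u-u_h)\|^2 + \|\sigma-\sigma_h\|_{-1}^2$ coming from Lemma~\ref{lem:residual-error}, choosing $\epsilon$ small enough (say $\epsilon = 1/16$) lets me move these terms over, leaving a constant multiple of $\|\nabla(u-u_h)\|^2+\|\sigma-\sigma_h\|_{-1}^2$ on the left. All the $1/\epsilon$ factors then just get swallowed into the generic constant $C$.

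Finally I would collect the surviving right-hand-side contributions: $C(\eta_f^2+\eta_J^2+\eta_\sigma^2)$ from $\|\cR_h\|_{-1}^2$ and the $\eta_\sigma^2$ already isolated; $C\|\nabla(u_h-\tilde u_h)\|^2 = C\eta_g^2$, which by Theorem~\ref{thm:Oscg} applied triangle-by-triangle over $\cT_h^b$ (recalling $u_h\equiv\tilde u_h$ on $\cT_h^i$) is bounded by $C\sum_{e\in\cE_h^b} h_e\|(g-g_h)'\|_{L^2(e)}^2$; $C\|\nabla(\chi-\tilde u_h)^+\|^2$; the contact terms $C\sum_{T\in\bF_h\cup\bC_h}\int_T(-\sigma_h)(\chi-\chi_h)^-\,dx$; and the free-boundary edge jumps $C\sum_{e\in\p\bF_h^i}\int_e h_e\sjump{\nabla(u_h-\chi_h)}^2\,ds$. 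This is exactly the claimed estimate, so the proof concludes. The only subtlety to state carefully is that the $\eta_g$ that enters through $\|\cR_h\|_{-1}$ and the $\|\nabla(u_h-\tilde u_h)\|^2$ terms are the same object and both get replaced by the boundary oscillation term at the very end.
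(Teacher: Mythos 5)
Your proposal is correct and coincides with the paper's argument: the paper's proof of Theorem \ref{thm:Reliable} is precisely "combine Lemma \ref{lem:residual-error}, Lemma \ref{lem:Rh} and Lemma \ref{lem:Lower}", with the absorption of the $\epsilon$-terms into the left-hand side and the replacement of $\eta_g^2=\|\nabla(u_h-\tilde u_h)\|^2$ by the boundary oscillation term via \eqref{eq:OscHg} or Theorem \ref{thm:Oscg}, exactly as you describe. Your write-up simply makes explicit the bookkeeping that the paper leaves implicit.
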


\begin{remark}
The error estimator in general  explicitly depends on the
post-processed solution $\tilde u_h$. If $\tilde u_h$ is defined
by the harmonic extension, then the error estimator is not
computable. However, the linear extension can be used to construct
$\tilde u_h$ and the error estimator is computable.
\end{remark}

Under an assumption that the free boundary is interior to $\Omega$, we prove the following estimate:
Note that the hypothesis we make in the corollary is easy to check.

\begin{corollary}\label{cor:Reliable}
Suppose that for all $T\in \cT_h^b$, there holds
\begin{align*}
\max_{z\in T}\chi(z)\leq \min_{z\in \partial T} u_h^*,
\end{align*}
where $u_h^*$ is defined as in \eqref{eq:uht}. Then, we have
\begin{align*}
\|\nabla(u-u_h)\|^2 +\|\sigma-\sigma_h\|_{-1}^2 &\leq C (\eta_f^2+\eta_J^2+\eta_\sigma^2)+C \sum_{e\in \cE_h^b} h_e\|(g-g_h)^\prime\|_{L_2(e)}^2\\
&\quad+C \sum_{T\in\cT_h^i} \|\nabla(\chi-
u_h)^+\|_{L_2(T)}^2+C \sum_{T\in\bF_h\cup\bC_h}\int_T(-\sigma_h)(\chi-\chi_h)^-\,dx\\
&\quad+C\sum_{e\in\p\bF_h^i}\int_e h_e \sjump{\nabla
(u_h-\chi_h)}^2\,ds.
\end{align*}
In particular, the error estimator is independent of the
post-processed solution $\tilde u_h$.
\end{corollary}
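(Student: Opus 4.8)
The plan is to deduce Corollary \ref{cor:Reliable} from Theorem \ref{thm:Reliable} by controlling the single term that still involves the post-processed solution, namely $\|\nabla(\chi-\tilde u_h)^+\|^2$, using the geometric hypothesis. First I would observe that $\tilde u_h \equiv u_h$ on every $T\in\cT_h^i$, so that the contribution of the interior triangles to $\|\nabla(\chi-\tilde u_h)^+\|^2$ is exactly $\sum_{T\in\cT_h^i}\|\nabla(\chi-u_h)^+\|_{L^2(T)}^2$, which is precisely the term appearing in the corollary. Hence everything reduces to showing that on each boundary triangle $T\in\cT_h^b$ the function $(\chi-\tilde u_h)^+$ vanishes identically, so that its gradient contributes nothing.

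The key step is the pointwise bound $\tilde u_h(z)\ge \min_{w\in\partial T}u_h^*(w)$ for all $z\in T$, which holds for both constructions: for the harmonic extension it is the minimum principle \eqref{eq:minP}, and for the linear extension it is the elementary observation \eqref{eq:minP1}. Combining this with the hypothesis $\max_{z\in T}\chi(z)\le \min_{w\in\partial T}u_h^*(w)$ gives, for every $z\in T$,
\begin{align*}
\chi(z)\le \max_{w\in T}\chi(w)\le \min_{w\in\partial T}u_h^*(w)\le \tilde u_h(z),
\end{align*}
so that $(\chi-\tilde u_h)^+\equiv 0$ on $T$, and therefore $\nabla(\chi-\tilde u_h)^+\equiv 0$ on $T$ as well. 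Summing over all $T\in\cT_h^b$ shows that the boundary triangles contribute nothing to $\|\nabla(\chi-\tilde u_h)^+\|^2$, whence
\begin{align*}
\|\nabla(\chi-\tilde u_h)^+\|^2 = \sum_{T\in\cT_h^i}\|\nabla(\chi-u_h)^+\|_{L^2(T)}^2.
\end{align*}
Substituting this identity into the estimate of Theorem \ref{thm:Reliable} yields the stated bound, and since no other term in that estimate depends on $\tilde u_h$ (the boundary data term is $\sum_{e\in\cE_h^b}h_e\|(g-g_h)'\|_{L^2(e)}^2$, the free-boundary and contact-set terms involve only $u_h$, $\chi$, $\chi_h$, $\sigma_h$), the resulting estimator is fully computable and independent of the choice of post-processing.

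I do not anticipate a serious obstacle here; the argument is essentially a bookkeeping reduction plus one invocation of the minimum principle. The only point requiring a little care is making sure the hypothesis is applied on the correct triangles and that the split of $\|\nabla(\chi-\tilde u_h)^+\|^2$ into interior and boundary parts is legitimate, which it is because $\tilde u_h\in H^1(\Omega)$ and the decomposition $\cT_h=\cT_h^i\cup\cT_h^b$ is a partition up to measure zero. One might also remark that the hypothesis $\max_{z\in T}\chi(z)\le\min_{w\in\partial T}u_h^*(w)$ is a discrete, a posteriori checkable surrogate for the free boundary staying interior to $\Omega$, so the corollary is of genuine practical use.
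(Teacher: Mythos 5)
Your proposal is correct and follows essentially the same route as the paper: invoke the minimum property \eqref{eq:minP}/\eqref{eq:minP1} together with the hypothesis to conclude $(\chi-\tilde u_h)^+\equiv 0$ on every $T\in\cT_h^b$, use $\tilde u_h\equiv u_h$ on interior triangles to identify the remaining contribution with $\sum_{T\in\cT_h^i}\|\nabla(\chi-u_h)^+\|_{L^2(T)}^2$, and substitute into Theorem \ref{thm:Reliable}. The elementwise splitting of $\|\nabla(\chi-\tilde u_h)^+\|^2$ that you justify is exactly what the paper does implicitly, so there is nothing to add.
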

\begin{proof}
From \eqref{eq:minP} and \eqref{eq:minP1}, we have for any
$T\in\cT_h^b$ that
\begin{align*}
\min_{z\in T}\tilde u_h(z)=\min_{z\in \partial T} u_h^*.
\end{align*}
Hence by the hypothesis in the statement of the corollary,
$(\chi-\tilde u_h)^+\equiv 0$ on any $T\in\cT_h^b$. Further since
$\tilde u_h\equiv u_h$ on any $T\in\cT_h^i$, we have on any such
$T$ that $(\chi-\tilde u_h)^+\equiv (\chi-u_h)^+$. Moreover, from
\eqref{eq:OscHg} and Theorem \ref{thm:Oscg}, we have
\begin{align*}
\eta_g^2=\|\nabla(u-\tilde u_h)\|^2\leq C \sum_{e\in \cE_h^b} h_e\|(g-g_h)^\prime\|_{L_2(e)}^2.
\end{align*}
The proof now follows from Theorem \ref{thm:Reliable}.
\end{proof}

\begin{corollary}\label{cor:P1obstacle}
Suppose that $\chi\in \bbP_1(\Omega)$. Then
\begin{align*}
\|\nabla(u-u_h)\|^2 +\|\sigma-\sigma_h\|_{-1}^2 &\leq C
(\eta_f^2+\eta_J^2+\eta_\sigma^2)+C \sum_{e\in \cE_h^b}
h_e\|(g-g_h)\,^\prime\|_{L_2(e)}^2.
\end{align*}
\end{corollary}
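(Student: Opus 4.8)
The plan is to derive Corollary \ref{cor:P1obstacle} as a special case of Theorem \ref{thm:Reliable}. The hypothesis $\chi\in\bbP_1(\Omega)$ makes the obstacle itself a piecewise (indeed globally) linear function, so its Lagrange interpolant $\chi_h$ coincides with $\chi$ exactly: $\chi_h=\chi$ on all of $\Omega$. The strategy is simply to feed this identity into the right-hand side of Theorem \ref{thm:Reliable} and check that three of the five error-contributing terms vanish, leaving only the standard residual estimators $\eta_f^2+\eta_J^2+\eta_\sigma^2$ and the Dirichlet data oscillation term $\sum_{e\in\cE_h^b}h_e\|(g-g_h)^\prime\|_{L_2(e)}^2$.

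First I would record that $\chi_h=\chi$, hence $(\chi-\chi_h)^-\equiv 0$, which kills the term $C\sum_{T\in\bF_h\cup\bC_h}\int_T(-\sigma_h)(\chi-\chi_h)^-\,dx$. Next, since $\chi\in\bbP_1$ and $\chi_h=\chi$, on every edge $e$ the jump $\sjump{\nabla(u_h-\chi_h)}=\sjump{\nabla u_h}-\sjump{\nabla\chi}=\sjump{\nabla u_h}$ (the gradient of a global $\bbP_1$ function has no jump), so the edge term $C\sum_{e\in\p\bF_h^i}\int_e h_e\sjump{\nabla(u_h-\chi_h)}^2\,ds$ is bounded by $C\eta_J^2$ and can be absorbed. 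The remaining nontrivial point is the term $C\|\nabla(\chi-\tilde u_h)^+\|^2$. Here I would argue that $\chi-\tilde u_h$ is, on each $T\in\cT_h^b$, controlled by $u_h-\tilde u_h$ together with $\chi-u_h$: on $\cT_h^i$ we have $\tilde u_h=u_h\ge\chi_h=\chi$ at the interior vertices, but one still needs the nonconforming part. Actually the cleanest route is to observe that $(\chi-\tilde u_h)^+\le(\chi-u_h)^+ +(u_h-\tilde u_h)^+\le(\chi-u_h)^+ +|u_h-\tilde u_h|$, and on $\cT_h^i$ the second piece vanishes while $(\chi-u_h)^+=0$ at all interior vertices — but not necessarily in the element interior unless $\chi$ is linear, which it is. Since $\chi\in\bbP_1$ and $u_h\in\bbP_1$ on $T$, the function $\chi-u_h$ is linear on $T$, and it is $\le 0$ at all three vertices of any $T\in\bN_h$; more carefully, $(\chi-u_h)^+$ is supported where $u_h(z)=\chi(z)$ for some vertex. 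This is where one must be a little careful, and I expect this to be the main (though minor) obstacle: showing $\|\nabla(\chi-\tilde u_h)^+\|^2\le C(\eta_J^2+\eta_g^2)$ or that it is directly absorbable.

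The honest resolution is that when $\chi\in\bbP_1$, the constraint $v_h(z)\ge\chi(z)$ at vertices combined with linearity forces $u_h\ge\chi$ pointwise on every $T$ (a linear function dominating another linear function at all vertices of a simplex dominates it everywhere), so $(\chi-u_h)^+\equiv 0$ on all of $\Omega$; on $\cT_h^i$, $\tilde u_h=u_h\ge\chi$, and on $\cT_h^b$ one uses $(\chi-\tilde u_h)^+\le|u_h-\tilde u_h|$ so that $\|\nabla(\chi-\tilde u_h)^+\|_{L_2(T)}\le\|\nabla(u_h-\tilde u_h)\|_{L_2(T)}=\eta_{g}$ contributions, which are already bounded by $C\sum_{e\in\cE_h^b}h_e\|(g-g_h)^\prime\|_{L_2(e)}^2$ via Theorem \ref{thm:Oscg} (or \eqref{eq:OscHg} for the harmonic extension). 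Assembling: starting from Theorem \ref{thm:Reliable}, substitute $\chi_h=\chi$ to delete the $(\chi-\chi_h)^-$ term, bound $\sjump{\nabla(u_h-\chi_h)}=\sjump{\nabla u_h}$ to fold the edge term into $\eta_J^2$, and bound $\|\nabla(\chi-\tilde u_h)^+\|^2\le\|\nabla(u_h-\tilde u_h)\|^2=\eta_g^2\le C\sum_{e\in\cE_h^b}h_e\|(g-g_h)^\prime\|_{L_2(e)}^2$. Collecting constants yields the claimed bound, and this completes the proof.
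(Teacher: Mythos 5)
Your reduction to Theorem \ref{thm:Reliable} is the right move, and your treatment of two of the three obstacle terms is correct: since $\chi_h=\chi$, the term $\sum_{T}\int_T(-\sigma_h)(\chi-\chi_h)^-\,dx$ vanishes, and $\sjump{\nabla(u_h-\chi_h)}=\sjump{\nabla u_h}$ lets the edge sum over $\p\bF_h^i$ be absorbed into $\eta_J^2$. The gap is in your handling of $\|\nabla(\chi-\tilde u_h)^+\|^2$ on boundary triangles. You pass from the pointwise inequality $(\chi-\tilde u_h)^+\le|u_h-\tilde u_h|$ to the norm inequality $\|\nabla(\chi-\tilde u_h)^+\|_{L_2(T)}\le\|\nabla(u_h-\tilde u_h)\|_{L_2(T)}$; this implication is false in general, since pointwise domination of function values says nothing about gradients. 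Concretely, $\nabla(\chi-\tilde u_h)^+=\mathbf{1}_{\{\chi>\tilde u_h\}}\bigl(\nabla(\chi-u_h)+\nabla(u_h-\tilde u_h)\bigr)$ a.e., and the constant vector $\nabla(\chi-u_h)$ on $T$ need not be small merely because $\chi-u_h\le0$ there.

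The paper closes this step differently: it shows the term is identically zero rather than bounding it by $\eta_g^2$. Besides $u_h\ge\chi$ on $\bar\Omega$ (which you correctly derive from linearity and the vertex constraints), it uses the standing assumption $\chi|_{\partial\Omega}\le g$, so that $u_h^*\ge\chi$ on all of $\partial T$ for every $T\in\cT_h^b$. Because $\chi$ is globally linear, both post-processings preserve this ordering in the interior of $T$: for the harmonic extension, $\tilde u_h-\chi$ is harmonic with nonnegative boundary data and hence nonnegative by the maximum principle; for the linear extension, $\tilde u_h$ interpolates $u_h^*$ linearly along each line orthogonal to $e$ while $\chi$ restricted to that line is also linear, so the endpoint ordering propagates along the line. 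Hence $(\chi-\tilde u_h)^+\equiv0$ on $\bar\Omega$ and the term drops out exactly, with no appeal to Theorem \ref{thm:Oscg} needed at this point (that theorem is still needed, as you note, to control $\eta_g^2$ itself inside $\|\cR_h\|_{-1}$ and Lemma \ref{lem:Lower}). Your argument should be repaired along these lines; as written, the boundary-triangle estimate for the positive-part term does not follow.
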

\begin{proof}
Since $\chi\in \bbP_1(\Omega)$, we have that $u_h\geq \chi$ on
$\bar\Omega$. From the hypothesis on $g$, we have $g\geq \chi$ on
$\partial\Omega$. These together imply that $u_h^*\geq \chi$ on
$\partial T$ for any $T\in\cT_h^b$. Further by the construction of
$\tilde u_h$ (in both the choices), we have for any $T\in \cT_h^b$
that
\begin{align*}
\max_{z\in T}\tilde u_h=\max_{z\in \partial T}u_h^*.
\end{align*}
Therefore $(\chi-\tilde u_h)^+\equiv 0$ on $\bar\Omega$. This
together with Theorem \ref{thm:Reliable} completes the proof.
\end{proof}

\begin{remark}
The post-processed solution $\tilde u_h$ defined by the linear
extension is computable once the discrete solution $u_h$ is known.
The new solution $\tilde u_h$ satisfies the exact boundary
conditions. It may also be useful in other contexts.
\end{remark}

\section{Simplified Error Estimator}\label{sec:Simplified}
In this section, we conclude a simplified error estimator for the
obstacle problem by equivalently rewriting the model problem
\eqref{eq:MP} as follows: Find $u\in \cK$ such that $u=w+\chi$,
where $w\in\cK_0$ satisfies
\begin{align}\label{eq:RMP}
a(w,v-w)\geq (f,v-w)-a(\chi,v-w)\;\; \text{ for all } v\in \cK_0,
\end{align}
and
\begin{align*}
\cK_0:=\{v\in H^1(\Omega): v\geq 0 \text{ a.e. in } \Omega,\;\;
\gamma_0(v)=g-\chi \text{ on } \partial\Omega \},
\end{align*}
It can be seen that the set $\cK_0$
is nonempty as $u-\chi \in \cK_0$. The discrete version of $\cK_0$ can be defined as
\begin{align*}
\cK_{0h}:=\{v\in V_h: v\geq 0 \text{ a.e. in } \Omega,\;\;
\gamma_0(v)=g_h-\chi_h \text{ on } \partial\Omega \},
\end{align*}
where $\chi_h\in V_h$ is the Lagrange interpolation of $\chi$. The
discrete problem \eqref{eq:DP} also can be equivalently rewritten
as to find $u_h\in \cK_h$ such that $u_h=w_h+\chi_h$, where
$w_h\in\cK_{0h}$ satisfies
\begin{align}\label{eq:RDP}
a(w,v-w)\geq (f,v-w)-a(\chi_h,v-w)\;\; \text{ for all } v\in \cK_{0h}.
\end{align}

\par
\noindent Let $\gamma_0(\chi)\in H^1(e)$ for all $e\in \cE_h^b$.
Then the Dirichlet data $g-\chi$ satisfies the hypothesis of
Theorem \ref{thm:Reliable} and hence the  Corollary
\ref{cor:P1obstacle}  deduce the following result under the
assumption that $\gamma_0(\chi)\in H^1(e)$ for all $e\in \cE_h^b$:

\begin{theorem}\label{thm:zeroobstacle}
Let $u\in\cK$  $(resp.\;\; u_h \in\cK_h)$ be the solution of
$\eqref{eq:MP}$ $(resp.\; \eqref{eq:DP})$. Assume that the
obstacle $\chi$ and the Dirichlet data $g$ satisfy
$\gamma_0(\chi)\in H^1(e)$ and $g\in H^1(e)$ for all $e\in
\cE_h^b$. Then there holds
\begin{align*}
\|\nabla(u-u_h)\|^2 +\|\sigma-\sigma_h\|_{-1}^2 &\leq C
(\eta_f^2+\eta_J^2+\eta_\sigma^2)+C \|\nabla(\chi-\chi_h)\|^2\\
&\quad+C \sum_{e\in \cE_h^b} h_e\|(g-g_h)\,^\prime\|_{L_2(e)}^2+ C
\sum_{e\in \cE_h^b} h_e\|(\chi-\chi_h)\,^\prime\|_{L_2(e)}^2.
\end{align*}
\end{theorem}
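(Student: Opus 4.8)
The plan is to derive Theorem \ref{thm:zeroobstacle} by applying the already-established reliability bound (Theorem \ref{thm:Reliable}, or more precisely its specialization Corollary \ref{cor:P1obstacle}) to the \emph{reformulated} problem \eqref{eq:RMP}--\eqref{eq:RDP} in which the obstacle is the zero function and the Dirichlet data is $g-\chi$. First I would record that $u=w+\chi$ and $u_h=w_h+\chi_h$, so that
\begin{align*}
\nabla(u-u_h)=\nabla(w-w_h)+\nabla(\chi-\chi_h),
\end{align*}
and hence, by the triangle inequality and Young's inequality, $\|\nabla(u-u_h)\|^2\leq 2\|\nabla(w-w_h)\|^2+2\|\nabla(\chi-\chi_h)\|^2$. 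This accounts for the extra $C\|\nabla(\chi-\chi_h)\|^2$ term appearing in the statement. The Lagrange multipliers $\sigma$ and $\sigma_h$ for the original and reformulated problems coincide: indeed $a(u,v)=a(w,v)+a(\chi,v)$ and $a(u_h,v)=a(w_h,v)+a(\chi_h,v)$ for $v\in V_h^0$, so the defining relations \eqref{eq:sigmadef} and \eqref{eq:sigmah-def} for the reformulated problem (whose right-hand side carries $-a(\chi,v)$, resp.\ $-a(\chi_h,v)$) produce exactly the same $\sigma,\sigma_h$; thus $\|\sigma-\sigma_h\|_{-1}^2$ needs no adjustment.

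Next I would verify that the reformulated problem satisfies the hypotheses of Corollary \ref{cor:P1obstacle}. The obstacle there is $\chi\equiv 0\in\bbP_1(\Omega)$, which is exactly the hypothesis of that corollary, so $(0-\tilde w_h)^+\equiv 0$ and the obstacle-consistency and free-boundary terms all vanish. The Dirichlet data is $g-\chi$; its nodal interpolant on $W_h$ is $g_h-\chi_h$ (since interpolation is linear and commutes with the difference on each boundary edge), and by hypothesis $\gamma_0(\chi)\in H^1(e)$ and $g\in H^1(e)$ for every $e\in\cE_h^b$, so $(g-\chi)|_e\in H^1(e)$ as required. Applying Corollary \ref{cor:P1obstacle} to \eqref{eq:RMP}--\eqref{eq:RDP} therefore gives
\begin{align*}
\|\nabla(w-w_h)\|^2+\|\sigma-\sigma_h\|_{-1}^2\leq C(\tilde\eta_f^2+\tilde\eta_J^2+\tilde\eta_\sigma^2)+C\sum_{e\in\cE_h^b}h_e\|\big((g-\chi)-(g_h-\chi_h)\big)'\|_{L_2(e)}^2,
\end{align*}
where the estimators $\tilde\eta_f,\tilde\eta_J,\tilde\eta_\sigma$ are those built from the reformulated data.

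It then remains to relate the reformulated estimators back to the original ones. The volume residual for \eqref{eq:RDP} is $f-\sigma_h-(-\Delta\chi_h)=f-\sigma_h$ on each $T$ since $\chi_h\in\bbP_1$ and $\Delta\chi_h=0$ elementwise; similarly $\nabla\sigma_h$ is unchanged, so $\tilde\eta_f=\eta_f$ and $\tilde\eta_\sigma=\eta_\sigma$. The jump estimator for $w_h$ satisfies $\sjump{\nabla w_h}=\sjump{\nabla u_h}-\sjump{\nabla\chi_h}$; since $\chi_h\in V_h$ is continuous and piecewise linear its gradient jump across interior edges need not vanish, so I would bound $\|\sjump{\nabla w_h}\|_{L_2(e)}^2\leq 2\|\sjump{\nabla u_h}\|_{L_2(e)}^2+2\|\sjump{\nabla\chi_h}\|_{L_2(e)}^2$; the second contribution is a data-oscillation term for $\chi$ that should be absorbed into $\|\nabla(\chi-\chi_h)\|^2$ via a standard inverse/trace estimate $h_e\|\sjump{\nabla\chi_h}\|_{L_2(e)}^2\leq C\|\nabla(\chi-\chi_h)\|_{L_2(\omega_e)}^2$ (using $\sjump{\nabla\chi}=0$). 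Finally, for the boundary term, $\big((g-\chi)-(g_h-\chi_h)\big)'=(g-g_h)'-(\chi-\chi_h)'$ on each $e$, so another application of Young's inequality splits it into the two boundary-oscillation sums in the statement. Combining these with the $2\|\nabla(\chi-\chi_h)\|^2$ term from the first paragraph yields the claimed bound. The main obstacle I anticipate is the jump-estimator bookkeeping: one must be careful that the reformulated jump residual $\sjump{\nabla w_h}$ genuinely reduces — up to the controllable oscillation $\|\nabla(\chi-\chi_h)\|^2$ — to the original $\eta_J$, rather than silently introducing an uncontrolled $\sjump{\nabla\chi_h}$ term; everything else is routine algebra and the invocation of Corollary \ref{cor:P1obstacle}.
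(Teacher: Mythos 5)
Your overall strategy is exactly the paper's: the paper proves Theorem \ref{thm:zeroobstacle} in one line by observing that the rewritten problem \eqref{eq:RMP}--\eqref{eq:RDP} has the zero obstacle (which lies in $\bbP_1(\Omega)$) and Dirichlet data $g-\gamma_0(\chi)$ with interpolant $g_h-\chi_h$, and then invoking Corollary \ref{cor:P1obstacle}; your identifications of $\sigma$, $\sigma_h$, the splitting $u-u_h=(w-w_h)+(\chi-\chi_h)$, and the boundary-oscillation term are all the right bookkeeping for that. However, your treatment of the jump estimator contains a step that fails. The absorption you propose, $h_e\|\sjump{\nabla\chi_h}\|_{L_2(e)}^2\leq C\|\nabla(\chi-\chi_h)\|_{L_2(\cT_e)}^2$, is false under the standing hypothesis $\chi\in C(\bar\Omega)\cap H^1(\Omega)$: the quantity $\sjump{\nabla\chi}$ is not even defined for a general $H^1$ function, and if you take $\chi$ itself continuous and piecewise linear on $\cT_h$ with a kink across an interior edge $e$, then $\chi_h=\chi$, so the right-hand side vanishes while $h_e^{1/2}\|\sjump{\nabla\chi_h}\|_{L_2(e)}>0$. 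So the term $\sjump{\nabla\chi_h}$ cannot be pushed into the data oscillation $\|\nabla(\chi-\chi_h)\|$.

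The repair is to never create that term. The load of the reformulated problem is the $H^{-1}$ functional $v\mapsto(f,v)-a(\chi,v)$, not an $L^2$ function, so Corollary \ref{cor:P1obstacle} cannot be applied completely verbatim: one replaces $a(\chi,\cdot)$ by $a(\chi_h,\cdot)$ at the cost of $\|\nabla(\chi-\chi_h)\|$ (as you do) and then reruns Lemma \ref{lem:Rh} for the residual
\begin{align*}
v\mapsto (f,v)-a(\chi_h,v)-a(\tilde w_h,v)-(\sigma_h,v).
\end{align*}
At this point you must keep $a(\chi_h,\cdot)+a(w_h,\cdot)=a(u_h,\cdot)$ together before integrating by parts elementwise: since $\Delta\chi_h=\Delta w_h=0$ on each $T$, the element residual is $f-\sigma_h$ and the edge residual is $\sjump{\nabla(\chi_h+w_h)}=\sjump{\nabla u_h}$, i.e.\ exactly the $\eta_J$ of the statement, with no separate $\sjump{\nabla\chi_h}$ contribution. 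In other words, the jump estimator of the reformulated problem is not $\sjump{\nabla w_h}$ (the modified load contributes its own edge jumps), and once this is recognized the problematic splitting and absorption are unnecessary. With that correction the rest of your argument goes through and reproduces the paper's proof.
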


\par
\noindent Note that the error estimator in Theorem
\ref{thm:zeroobstacle} does not involve any min/max functions
(which are non smooth and not easy to compute in the
implementation).

\subsection{Local Efficiency Estimates:}
The terms
\begin{align*}
\left(\sum_{e\in \cE_h^b}
h_e\|(g-g_h)\,^\prime\|_{L_2(e)}^2\right)^{1/2},\quad
\left(\sum_{e\in \cE_h^b}
h_e\|(\chi-\chi_h)\,^\prime\|_{L_2(e)}^2\right)^{1/2}
\quad\text{and}\quad \|\nabla(\chi-\chi_h)\|
\end{align*}
are considered to be the data approximations. We will prove the
local efficiency of the remaining error estimators.

\begin{theorem}\label{thm:Eff}
There holds
\begin{align}
 h_T \|f-\sigma_h\|_{L_2(T)}&\leq C\big(|u-u_h|_{H^1(T)}+\|\sigma-\sigma_h\|_{H^{-1}(T)}+h_T\|f-\bar
 f\|_{L_2(T)}\big),\label{eq:oscf2}\\
h_T^2 \|\nabla \sigma_h\|_{L_2(T)}&\leq
C\big(|u-u_h|_{H^1(T)}+\|\sigma-\sigma_h\|_{H^{-1}(T)}+h_T\|f-\bar
 f\|_{L_2(T)}\big),\label{eq:effsigma}
\end{align}
for any $T\in \cT_h$ and $\bar f\in \bbP_0(T)$. Furthermore for
any $e\in\cE_h^i$, there holds
\begin{align}
 h_e^{1/2}\|\sjump{\nabla u_h}\|_{L_2(e)}&\leq C\big(|u-u_h|_{H^1(\cT_e)}+\|\sigma-\sigma_h\|_{H^{-1}(\cT_e)}+h_e\|f-\bar
 f\|_{L_2(\cT_e)}\big),\label{eq:effjump}
\end{align}
where $\cT_e$ is the patch of two triangles $T_\pm$ sharing the
edge $e$, see Fig \ref{Fig1}.
\end{theorem}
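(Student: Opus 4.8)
The plan is to use the standard bubble-function technique of Verf\"urth, adapted to the obstacle setting as in \cite{Veeser:2001:VI,TG:2014:VIDG}. The starting observation is that on each element $T$ the quantity $f-\sigma_h$ plays the role of the elementwise residual for the discrete solution $u_h$, because $\Delta u_h|_T = 0$ (since $u_h\in\bbP_1(T)$) and hence for any $v\in H^1_0(T)$ one has $(f-\sigma_h,v)=(f,v)-a(u_h,v)-(\sigma_h,v) = \langle\sigma-\sigma_h,v\rangle + a(u-u_h,v)$, using the definition \eqref{eq:sigmadef} of $\sigma$ and integration by parts on $T$. First I would prove \eqref{eq:oscf2}: let $b_T$ be the interior bubble on $T$, set $\bar f\in\bbP_0(T)$, test with $v = b_T(\bar f-\sigma_h)$, use the norm equivalence $\|\bar f-\sigma_h\|_{L_2(T)}^2 \leq C(\bar f-\sigma_h, b_T(\bar f-\sigma_h))$, and estimate the right-hand side by $|u-u_h|_{H^1(T)}|v|_{H^1(T)} + \|\sigma-\sigma_h\|_{H^{-1}(T)}|v|_{H^1(T)} + \|f-\bar f\|_{L_2(T)}\|v\|_{L_2(T)}$, with the inverse estimate $|v|_{H^1(T)}\leq C h_T^{-1}\|v\|_{L_2(T)}\leq C h_T^{-1}\|\bar f-\sigma_h\|_{L_2(T)}$. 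Dividing through and adding $h_T\|f-\bar f\|_{L_2(T)}$ via the triangle inequality $\|f-\sigma_h\|_{L_2(T)}\leq \|f-\bar f\|_{L_2(T)}+\|\bar f-\sigma_h\|_{L_2(T)}$ gives \eqref{eq:oscf2}.

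For the edge estimate \eqref{eq:effjump}, I would use the edge bubble $b_e$ on the patch $\cT_e = T_+\cup T_-$, extend $\sjump{\nabla u_h}$ (a constant on $e$) to $\cT_e$, and test the residual identity on $\cT_e$ with $v = b_e \sjump{\nabla u_h}$. Integration by parts on each of $T_\pm$ produces the jump term $\int_e \sjump{\nabla u_h}\, v\,ds$ plus element contributions involving $f-\sigma_h$ (again using $\Delta u_h=0$), which are controlled by the already-established bound \eqref{eq:oscf2} together with the scaling $\|v\|_{L_2(T_\pm)}\leq C h_e^{1/2}\|\sjump{\nabla u_h}\|_{L_2(e)}$, $|v|_{H^1(T_\pm)}\leq C h_e^{-1/2}\|\sjump{\nabla u_h}\|_{L_2(e)}$, and the norm equivalence $\|\sjump{\nabla u_h}\|_{L_2(e)}^2\leq C\int_e b_e\sjump{\nabla u_h}^2\,ds$. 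Rearranging yields \eqref{eq:effjump}. Finally, \eqref{eq:effsigma} follows from \eqref{eq:oscf2}: since $\sigma_h\in V_h^0$ vanishes on $\p\O$ and, more importantly, $\nabla\sigma_h$ is piecewise constant, an inverse estimate gives $h_T^2\|\nabla\sigma_h\|_{L_2(T)}\leq C h_T\|\sigma_h\|_{L_2(T)}\leq C h_T\big(\|f-\sigma_h\|_{L_2(T)}+\|f\|_{L_2(T)}\big)$; the cleaner route, which I would use, is to bound $h_T\|\sigma_h\|$ on the contact-related elements via the Poincar\'e-type inequality \eqref{eq:SigmaPoincare} already exploited in Lemma \ref{lem:Lower}, reducing \eqref{eq:effsigma} to \eqref{eq:oscf2} plus data oscillation, and on purely non-contact elements $\sigma_h\equiv 0$ so there is nothing to prove.

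The main obstacle I anticipate is the bookkeeping in \eqref{eq:effsigma}: controlling $h_T^2\|\nabla\sigma_h\|_{L_2(T)}$ requires knowing that $\sigma_h$ itself is small on $T$, not just its residual combination with $f$, and the honest way to get this is to distinguish whether $T$ touches the contact set (so that some vertex value of $\sigma_h$ vanishes, allowing \eqref{eq:SigmaPoincare}) or lies in the non-contact set (where $\sigma_h\equiv 0$). A secondary technical point is that in the edge estimate the patch $\cT_e$ must be handled as a whole — one cannot integrate by parts on a single $T_\pm$ without picking up the half-jump — so the extension of the constant $\sjump{\nabla u_h}$ across $e$ and the localization of $b_e$ to $\cT_e$ need to be set up carefully. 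Everything else is the routine scaling-and-inverse-estimate machinery; no new ideas beyond those already used in Lemma \ref{lem:Lower} are needed.
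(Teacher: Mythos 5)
Your treatment of \eqref{eq:oscf2} and \eqref{eq:effjump} is exactly the paper's argument: interior bubble $b_T$ applied to $\bar f-\sigma_h$ with the residual identity coming from $\Delta u_h|_T=0$ and \eqref{eq:sigmadef}, followed by the edge bubble on the patch $\cT_e$ with $\sjump{\nabla u_h}$ extended as a constant along lines orthogonal to $e$; the scalings and norm equivalences you list are the ones used. No issues there.

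The gap is in \eqref{eq:effsigma}. Your first route, $h_T^2\|\nabla\sigma_h\|_{L_2(T)}\leq Ch_T\|\sigma_h\|_{L_2(T)}\leq Ch_T(\|f-\sigma_h\|_{L_2(T)}+\|f\|_{L_2(T)})$, leaves the term $h_T\|f\|_{L_2(T)}$, which is not the oscillation $h_T\|f-\bar f\|_{L_2(T)}$ and is not higher order, so it does not prove the stated bound. Your ``cleaner route'' via \eqref{eq:SigmaPoincare} is circular: that inequality reads $\|\sigma_h\|_{L_2(T)}\leq Ch_T\|\nabla\sigma_h\|_{L_2(T)}$, i.e. it bounds $\sigma_h$ \emph{by} its gradient, which is the wrong direction when the quantity you are trying to control is $\|\nabla\sigma_h\|_{L_2(T)}$ itself; and the case distinction between contact and non-contact elements buys you nothing here. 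The paper's argument is a one-line observation that makes the whole issue disappear: since $\bar f\in\bbP_0(T)$ is constant, $\nabla\sigma_h=-\nabla(\bar f-\sigma_h)$ on $T$, so the inverse inequality applied to the polynomial $\bar f-\sigma_h$ gives $h_T^2\|\nabla\sigma_h\|_{L_2(T)}=h_T^2\|\nabla(\bar f-\sigma_h)\|_{L_2(T)}\leq Ch_T\|\bar f-\sigma_h\|_{L_2(T)}$, and the right-hand side is precisely the intermediate quantity \eqref{eq:oscf1} already bounded in the proof of \eqref{eq:oscf2} (or, equivalently, is controlled by \eqref{eq:oscf2} plus the triangle inequality $\|\bar f-\sigma_h\|\leq\|f-\sigma_h\|+\|f-\bar f\|$). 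With that substitution your proof closes; as written, it does not.
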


\begin{proof}
The proof follows by bubble function techniques
\cite{Verfurth:1994:Adaptive,Verfurth:1995:AdaptiveBook}
\cite{Veeser:2001:VI}. Let $\bar f|_T \in \bbP_{0}(T)$ for all
$T\in\cT_h$.
 Let $T\in\cT_h$ be arbitrary and $b_T\in \bbP_3(T)\cap H^1_0(T)$ be the bubble function defined on $T$ such that
 $b_T(x_T)=1$, where $x_T$ is the barycenter of $T$.
 Let $\phi=b_T(\bar f-\sigma_h)$ on $T$ and extend it to be zero on $\Omega\backslash T$.
 We have for some mesh independent constants $C_1$ and $C_2$ that
 $$C_1 \|\bar f-\sigma_h\|_{L_2(T)} \leq \|\phi\|_{L_2(T)}\leq C_2 \|\bar f-\sigma_h\|_{L_2(T)}.$$
 It follows from \eqref{eq:sigmadef} and integration by parts that
\begin{align*}
 \int_T (f-\sigma_h)\phi\,dx &=\int_T (f-\sigma_h) \phi\,dx+\int_T \Delta u_h\, \phi\,dx \\
&=\int_\Omega f \phi\,dx-\int_\Omega \nabla u_h\cdot\nabla
\phi\,dx-\langle \sigma_h,\phi\rangle\\
&=\int_\Omega \nabla (u-u_h) \cdot\nabla \phi\,dx+\langle
\sigma-\sigma_h,\phi\rangle\\ &=\int_T \nabla (u-u_h) \cdot\nabla
\phi\,dx+\langle \sigma-\sigma_h,\phi\rangle_{H^{-1}(T)\times
H^1_0(T)}
 \end{align*}
 Using a standard inverse estimate \cite{BScott:2008:FEM,Ciarlet:1978:FEM},   we find
\begin{align*}
 C_1 \|\bar f-\sigma_h\|_{L_2(T)}^2&\leq \int_T (\bar f-\sigma_h) \phi\,dx
     =\int_T(\bar f-f)\phi\,dx+\int_T (f-\sigma_h)\phi\,dx\\
     &=\int_T(\bar f-f)\phi\,dx+\int_T \nabla (u-u_h)
\cdot\nabla \phi\,dx+\langle
\sigma-\sigma_h,\phi\rangle_{H^{-1}(T)\times H^1_0(T)}\\
     &\leq \|f-\bar f\|_{L_2(T)}\|\phi\|_{L_2(T)}
                 +\left(|u-u_h|_{H^1(T)}+\|\sigma-\sigma_h\|_{H^{-1}(T)}\right)|\phi|_{H^1(T)}\\
     &\leq \Big(\|f-\bar f\|_{L_2(T)}+C h_T^{-1}\left(|u-u_h|_{H^1(T)}+\|\sigma-\sigma_h\|_{H^{-1}(T)}\right)\Big)\|\phi\|_{L_2(T)}\\
     &\leq C\Big(\|f-\bar f\|_{L_2(T)}+h_T^{-1}\left(|u-u_h|_{H^1(T)}+\|\sigma-\sigma_h\|_{H^{-1}(T)}\right)\Big)\|\bar f-\sigma_h\|_{L_2(T)},
\end{align*}
which implies
\begin{equation}\label{eq:oscf1}
 h_T \|\bar f-\sigma_h\|_{L_2(T)}\leq C\big(|u-u_h|_{H^1(T)}+\|\sigma-\sigma_h\|_{H^{-1}(T)}+h_T\|f-\bar f\|_{L_2(T)}\big).
\end{equation}
Using the triangle inequality, we obtain
\begin{equation*}
 h_T \|f-\sigma_h\|_{L_2(T)}\leq C\big(|u-u_h|_{H^1(T)}+\|\sigma-\sigma_h\|_{H^{-1}(T)}+h_T\|f-\bar f\|_{L_2(T)}\big).
\end{equation*}
This completes the proof of \eqref{eq:oscf2}. Using an inverse
inequality \cite{BScott:2008:FEM,Ciarlet:1978:FEM}, note that
\begin{equation*}
 h_T^2 \|\nabla \sigma_h\|_{L_2(T)}=h_T^2 \|\nabla (\bar f-\sigma_h)\|_{L_2(T)}\leq C h_T\|\bar
 f-\sigma_h\|_{L_2(T)}.
\end{equation*}
An appeal to \eqref{eq:oscf1}, completes the proof of
\eqref{eq:effsigma}. We next prove \eqref{eq:effjump} using
similar bubble function techniques.
 Let $e\in \cE_h^i$ and $T_\pm$ be the triangles sharing this edge $e$. Denote by $\cT_e$ the patch of the two triangles $T_\pm$ (cf. Fig.~\ref{Fig1}).

 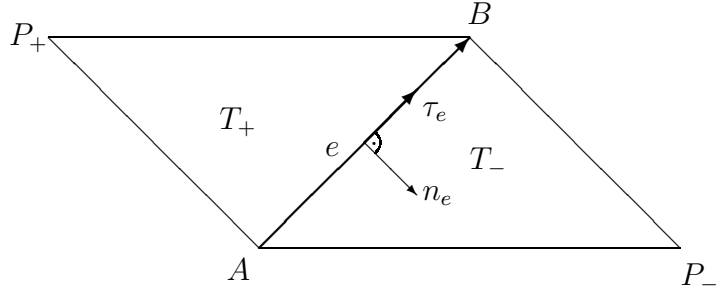
\begin{figure}[!hh]
\begin{center}
\setlength{\unitlength}{0.7cm}
\begin{picture}(8,6)
\put(2,1){\line(1,0){8}} \put(2,1){\line(-1,1){4}}
\put(6,5){\line(-1,0){8}} \put(10,1){\line(-1,1){4}}
\thicklines\put(2,1){\vector(1,1){4}}\thinlines
\put(1.39,0.39){$A$} \put(5.95,5.25){$B$} \put(10,0.29){$P_{-}$}
\put(-2.75,4.85){$P_{+}$} \put(1.25,3.25){$T_{+}$}
\put(6,2.5){$T_{-}$} \put(4,3){\vector(1,-1){1}}
\put(5.10,1.90){$n_{e}$}
\thicklines\put(4,3){\vector(1,1){1}}\thinlines
\put(5.10,3.50){$\tau_{e}$} \put(3.25,2.75){$e$}
\qbezier(4.2,2.8)(4.4,3.0)(4.2,3.2) \put(4.1,2.95){.}
\end{picture}
\caption{Two neighboring triangles $T_+$ and $T_{1}$ that share
the edge $e=\partial T_+\cap\partial T_-$ with initial node $A$
and end node $B$ and unit normal $n_e$. The orientation of $n_e =
n_{+} = -n_{-}$ equals the outer normal of $T_{+}$, and hence,
points into $T_{-}$.} \label{Fig1}
\end{center}
\end{figure}

 Now consider $\sjump{\nabla u_h}$ on $e$ and extend it to $T_\pm$ so that it is constant along the lines orthogonal to $e$.
 Denote the resulting function by $\zeta_1\in \bbP_{0}(\cT_e)$. It is then obvious that $\zeta_1=\sjump{\nabla u_h}$ on $e$.
Construct a piecewise polynomial bubble function $\zeta_2\in
H_0^1(\cT_e)$ such that $\zeta_2(x_e)=1$, where $x_e$ is the
midpoint of $e$. Denote $\phi=\zeta_1\zeta_2$ and extend $\phi$ to
be zero on $\Omega\backslash\cT_e$. We have by scaling
\cite{Verfurth:1994:Adaptive},
\begin{equation}
C \|\phi\|_{L_2(\cT_e)}\leq \left(\int_e h_e \sjump{\nabla u_h}^2
ds\right)^{1/2}, \label{eq:BBE}
\end{equation}
for some mesh independent constant $C$. Then, using
\eqref{eq:sigmadef}, Poincar\'e's inequality and a standard
inverse inequality \cite{BScott:2008:FEM,Ciarlet:1978:FEM}, we
find
\begin{eqnarray*}
\int_e \sjump{\nabla u_h}^2 ds &\leq & C  \int_e \sjump{\nabla
u_h}\zeta_1\zeta_2 ds
= C  \int_{\cT_e}  \nabla u_h \cdot \nabla \phi \,dx  \nonumber\\
&=& C\left(\int_{\cT_e}  \nabla (u_h-u) \cdot \nabla \phi \,dx +\int_{\cT_e} f\,\phi\,dx-\langle \sigma,\phi\rangle \right)\nonumber\\
&=& C\left(\int_{\cT_e}  \nabla (u_h-u) \cdot \nabla \phi \,dx +\int_{\cT_e} (f-\sigma_h)\,\phi\,dx-\langle \sigma-\sigma_h,\phi\rangle \right)\nonumber\\
&\leq & C \left(  h_e \|f-\sigma_h\|_{L_2(\cT_e)}+|u-u_h|_{H^1(\cT_e)}+\|\sigma-\sigma_h\|_{H^{-1}(\cT_e)} \right)\, |\phi|_{H^1(\cT_e)}\\
&\leq & C \left(  h_e
\|f-\sigma_h\|_{L_2(\cT_e)}+|u-u_h|_{H^1(\cT_e)}+\|\sigma-\sigma_h\|_{H^{-1}(\cT_e)}
\right)\,h_e^{-1}\|\phi\|_{L^2(\cT_e)}.
\end{eqnarray*}
Finally using \eqref{eq:BBE}, \eqref{eq:effsigma} and
\eqref{eq:oscf2}, we complete the proof of \eqref{eq:effjump}.
\end{proof}

\section{Numerical Experiments}\label{sec:Numerics}
In this section, we present some numerical experiments. Let
$\Omega$ be the unit disk and the obstacle function to be
$\chi=1-2r^2$, where $r=\sqrt{x^2+y^2}$. The load function $f$ is
taken to be
\begin{equation*}
f(r) := \left\{ \begin{array}{ll} 0 & \text{ if } r< r_0,\\\\
4r_0/r & \text{ if } r\geq r_0,
\end{array}\right.
\end{equation*}
in such a way that the solution $u$ takes the form
\begin{equation*}
u(r):= \left\{ \begin{array}{ll} 1-2r^2 & \text{ if } r < r_0,\\\\
4r_0(1-r) & \text{ if } r \geq r_0,
\end{array}\right.
\end{equation*}
where $r_0=(\sqrt{2}-1)/\sqrt{2}$.

In the experiment, we use the adaptive algorithm consisting of
four successive modules
\begin{equation*}
{\rm SOLVE}\rightarrow  {\rm ESTIMATE} \rightarrow {\rm
MARK}\rightarrow {\rm REFINE}
\end{equation*}
We use the primal-dual active set strategy
\cite{HK:2003:activeset} in the step SOLVE to solve the discrete
obstacle problem. The estimator derived in  Theorem
\ref{thm:zeroobstacle} is computed in the step ESTIMATE and then
the D\"orfler's bulk marking strategy \cite{Dorfler:1996:Afem}
with parameter $\theta=0.3$ has been used in the step MARK to mark
the elements for refinement. Using the newest vertex bisection
algorithm, we refine the mesh and obtain a new mesh. The
convergence history of errors and estimators is depicted in Figure
\ref{fig:ErrEst}. The figure illustrates the optimal order of
convergence as well as the reliability of the error estimator. The
efficiency index := estimator/error is computed and plotted in
Figure \ref{fig:EI}. The free boundary set and the contact set
have been captured by the error estimator efficiently see Figure
\ref{fig:mesh}. The experiment clearly illustrates the theoretical
results derived in the article.

\begin{figure}[t]
\begin{center}
\includegraphics*[width=8cm,height=5cm]{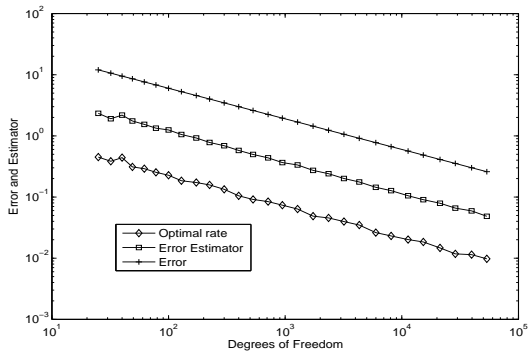}
\caption{{\large Error and Estimator}} \label{fig:ErrEst}
\end{center}
\end{figure}

\begin{figure}[t]
\begin{center}
\includegraphics*[width=8cm,height=5cm]{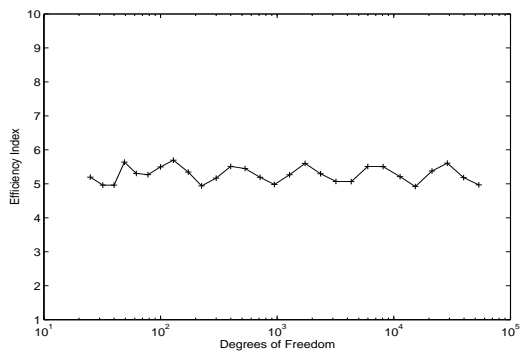}
\caption{{\large Efficiency Index}} \label{fig:EI}
\end{center}
\end{figure}

\begin{figure}[t]
\begin{center}
\includegraphics*[width=8cm,height=5cm]{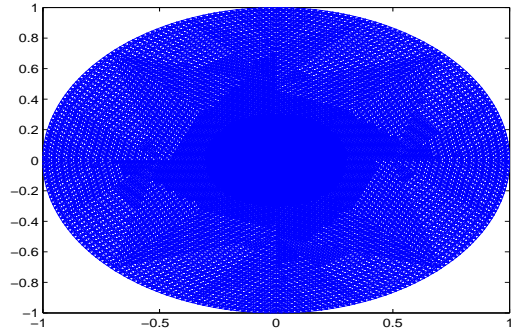}
\caption{{\large Mesh Refinement at Intermediate Level}}
\label{fig:mesh}
\end{center}
\end{figure}

\section{Conclusions}\label{sec:conclusions}
The a posteriori error analysis of finite element method for the
elliptic obstacle problem is revisited. By rewriting general
obstacle problem into a problem with zero obstacle and
inhomogeneous Dirichlet boundary condition, we have derived
reliable and efficient error bounds that are simpler and easily
computable. The analysis covers the inhomogeneous Dirichelt
boundary condition by constructing suitable post-processed
solutions that satisfy the boundary conditions exactly. We have
proposed two post-processing methods in which one is by harmonic
extension near the boundary and the other one is by linear
interpolation. These constructions are possible in three
dimensions with appropriate modifications and further the harmonic
extension is directly extendable. Numerical experiments illustrate
the theoretical results.

%\smallskip
%
%\par
%{\bf Acknowledgements:} The first author acknowledge the support
%from the UGC and the second author acknowledge the support from a
%DST Fast Track  project.
%%%%%%%%%%%%%%%%%%%%%%%%%%%%%%%%%%%%%%%%%%%%%%%%%%%%%%%%%%%%%%%%%
%

\end{document}